\newtheorem{thm}{\bf Theorem}[section]
\newtheorem{defn}[thm]{\bf Definition}
\newtheorem{lem}[thm]{\bf Lemma}
\newtheorem{cor}[thm]{\bf Corollary}
\newtheorem{rem}[thm]{\bf Remark}
\newtheorem{example}[thm]{\bf{Example}}
\newtheorem{prop}[thm]{\bf{Proposition}}
\begin{document}

\renewcommand{\theequation}{\arabic{section}.\arabic{equation}}
\def\t{mathbb{T}}
\def\r{\mathbb{R}}
\def\H{\mathcal{H}}
\def\x{\mathcal{X}}
\def\y{\mathcal{Y}}
\def\C{\mathbb{C}}
\def\n{\mathbb{N}}
\def\Z{\mathbb{Z}}
\def\<{\langle}
\def\>{\rangle}
\def\D{\boldsymbol{\mathcal{D}}}
\def\F{\mathcal{F}}
\def\2{L^2}
\def\c{L^2([0,T];H^1(0,1))}
\def\i{\infty}
\def\h{\hat}
\def\v{\vec}
\def\lam{\lambda}
\def\ds{\displaystyle}

\renewcommand{\abstractname}{}
\renewcommand{\theequation}{\thesection.\arabic{equation}}
\numberwithin{equation}{section}
\theoremstyle{definition}
\renewcommand{\proofname}{\noindent Proof}
\newtheorem{Definition}{\noindent Definition}[section]
\renewcommand{\refname}{\center{References}}

\title{\Large On the High Dimensional RSA Algorithm$\raisebox{0.5mm}{\bf{{---}}}$A Public Key Cryptosystem Based on Lattice and Algebraic Number Theory}

\author{Zhiyong Zheng$^{~a}$ ~~ Fengxia Liu$^{~b,*}$ \\[10pt]
\small {$^{a,b }$  Engineering Research Center of Ministry of Education for Financial Computing} \\
\small {and Digital Engineering, Renmin University of China,}\\
\small { Beijing, 100872, P. R. China}\\
{*Corresponding author E-mail addresses:  liu$\_$fx@ruc.edu.cn}}

\date{}
\maketitle

    \noindent \textbf{Abstract}   The most known of public key cryptosystem was introduced in 1978 by Rivest, Shamir and Adleman\cite{R}
    and now called the RSA public key  cryptosystem  in their honor. Later, a few authors gave a simply extension of RSA over algebraic numbers field( see \cite{T}-\cite{U2}), but they require that the ring of algebraic integers is Euclidean ring, this requirement is much more stronger than the class number one condition.  In this paper, we introduce a high dimensional form of RSA by making use
   of the ring of algebraic integers of an algebraic number field and the lattice theory. We give an  attainable algorithm (see Algorithm I below) of which is significant both from the theoretical and practical point of view. Our main purpose in this paper is to show that the high dimensional RSA is a lattice based on public key         cryptosystem  indeed, of which would be considered as a new number in the family
    of post-quantum cryptography(see \cite{P} and \cite{P2}). On the other hand, we give a matrix expression for any algebraic number fields (see Theorem \ref{th2} below), which is a new result even in  the sense of classical algebraic number theory.

\vspace{0.4cm}
\noindent \textbf{Keywords:} RSA, The Ring of  Algebraic Integers, Ideal Matrix, Ideal Lattice, HNF Basis.

\newpage
\numberwithin{equation}{section}
\section{Introduction}

Let $ Q, \mathbb{R},\mathbb{C}$ be the rational numbers field, real numbers field,
and complex numbers field respectively, $ \mathbb{Z}$ be the integers ring. Let $ E \subset \mathbb{C}$ be an algebraic numbers
field of degree $n$, $R \subset E$ be the ring of algebraic integers of $E$. Suppose that $A \subset R$ is a non-zero ideal(all ideals in this paper are non-zero), then the factor ring $R/A$ is a finite ring, we denote by $N(A)$ the number of elements of $R/A$, which is called the norm of $A$,
and denote by $ \varphi(A)$ the number of invertible elements of $R/A$, which is called the Euler totient function of $A$. For any
$\alpha \in R,$ the principal ideal generated by $\alpha$ is denoted by $\alpha R$, then $\alpha$ is an invertible element of $R/A$ if
and only if $(\alpha R,A)=1.$ It is known (see Theorem 1.19 of \cite{N})that
\begin{equation}\label{1.1}
\varphi(A)=N(A)\prod_{P | A}(1-\frac{1}{ N(P)})
\end{equation}
where the product is extended over all prime ideals $P$ dividing $A$. Moreover, if $\alpha \in R$ and $ (\alpha R,A)=1,$ then
\begin{equation}\label{1.2}
\alpha^{\varphi(A)}\equiv 1(\text{mod}\  A).
\end{equation}

To generalize that RSA to arbitrary algebraic number fields $E$, we first show that the following assertion.

\begin{thm}\label{th1}
Let $P_{1}$ and $P_{2}$ be two distinct prime ideals of $R$ and $A=P_{1}P_{2}$, then for any $\alpha \in R$ and integer $k\geq 0,$ we have
\begin{equation}\label{1.3}
\alpha^{k\varphi(A)+1}\equiv \alpha(\operatorname{mod} \  A).
\end{equation}

\end{thm}
\begin{proof}
Let $ \alpha \in R.$ If $(\alpha R, A)=1,$ then  (\ref{1.3}) follows directly from (\ref{1.2}). If $(\alpha R, A)=A,$ then $ \alpha R \subset  A$ and $ \alpha \in A,$ (\ref{1.3}) is trivial. Thus, we only consider the cases of $(\alpha R, A)=P_{1}$ and $(\alpha R, A)=P_{2}$. If
$ (\alpha R, A)=P_{1}$, then $ (\alpha R, P_{2})=1$, by (\ref{1.2}) we have
$$
\alpha^{\varphi(P_{2})}\equiv 1(\text{mod}\  P_{2}).
$$
It follows that
$$
\alpha^{k \varphi(A)}\equiv 1(\text{mod}\  P_{2}),\ \ \forall k \in \mathbb{Z},\ \ k\geq 0.
$$
Therefore, there exists an element $\beta \in P_{2}$ such that
$$
\alpha^{k \varphi(A)}= 1+ \beta.
$$
We thus have
$$
\alpha^{k \varphi(A)+1}= \alpha+ \alpha\beta, \ \ \text{and} \ \ \alpha^{k \varphi(A)+1}\equiv \alpha(\text{mod}\ A),
$$
since $ \alpha\beta \in A.$ The same reason gives (\ref{1.3}) when $ (\alpha R, A)=P_{2}$.

\end{proof}

According to Theorem \ref{th1}, one can easily extend the classical RSA over an algebraic number field as follows(also see \cite{T} , but it does not give the proof of (\ref{1.3}) ).

\newpage
\begin{table}[h]
\begin{tabular}{l}
\hline
 { \bf{ \  \  RSA in the ring of algebraic integers}}\\
\hline
\\
$\bullet $\ \ {\bf{\text{Parameters:}}} \ \  \  $n\geq 1$ \text{is a positive integer}, $E/Q$ is an algebraic numbers field of\\
\ \ \ \ \ \ \ \ \ \ \ \ \ \ \ \ \  \ \ \ \ \  \ degree $n$,
$ R\subset E$  \text{ is the ring of algebraic integers of } $E$. $P_{1}$ \text{and}  $P_{2}$ \\
\ \ \ \ \ \ \ \ \ \ \ \ \ \ \ \ \  \ \ \ \ \  \ are two prime ideals  of  $R$,
 $A=P_{1}P_{2}$,  $R/A $ is the factor ring,\\
  \ \ \ \ \ \ \ \ \ \ \ \ \ \ \ \ \  \ \ \ \ \ \
$ S $ is a set of coset representatives   of R/A, $\varphi(A)$  is the Euler \\
  \ \ \ \ \ \ \ \ \ \ \ \ \ \ \ \ \  \ \ \ \ \ \  function of  $A$,
 $1\leq e < \varphi(A)$
\text{ and} $1\leq d < \varphi(A)$ are two positive\\
      \ \ \ \ \ \ \ \ \ \ \ \ \ \ \ \ \  \ \ \ \ \  \    integers such that
   $ed\equiv 1(\text{mod}\ \ \varphi(A))$. \\
$\bullet$\ \ {\bf{Public keys:}} \ \   \text{ The ideal}  $A $ \text{and positive integer} $e $\text{ are the public keys}.\\
$\bullet$\ \ {\bf{Private keys:}} \ \   \text{The prime ideals } $P_{1}, P_{2}$ \text{ and the positive integer} $d$  \text{are the }\\
  \ \ \ \ \ \ \ \ \ \ \ \ \ \ \ \ \  \ \ \ \ \  \ \  private keys.\\
$\bullet$\ \  {\bf{Encryptions:}}  \ \  \text{For any input message}  $ \alpha \in S $, \text{ the ciphertext} $c$  \text{is} $c\equiv\alpha^{e}(\text{mod}\ \  A)$.\\
$\bullet$\ \  {\bf{Decryption:}} \ \  \ $c^{d} \equiv \alpha^{e d} \equiv \alpha(\text{ mod} \ \ A )$, \text{one can find plaintext} $\alpha$ \text{from} $c$ {in} $S$.\\
\hline
\end{tabular}
\caption{RSA in the ring of algebraic integers}
\end{table}

Obviously, if $n=1$, the above algorithm is  the ordinary RSA. However, it is difficult to find the prime ideals in $R$ and to construct a set of coset representatives of $R/A$ yet. In \cite{T}, the author supposed the ring $R$ is Euclidean ring, so that $S$ can be constructed by Euclidean algorithm in $R$.  The simplest way is to select an prime element $\alpha$ in $R$, so that the principal ideal $\alpha R$ is a prime ideal. In algorithm  I, we would precisely construct a set of coset representatives for the factor ring $R/A$ by  the lattice theory. Here we give an approximately construction of the set of coset representatives
for factor ring $R/A$.

If $P \subset R$ is a prime ideal, then $P \cap \mathbb{Z}=p \mathbb{Z}$, where $p \in \mathbb{Z}$ is a rational prime number. Since $R/P$ is a finite field and $\mathbb{Z}/(p\mathbb{Z})\subset  R/P$, thus $N(P)=p^{f}$, where $f\left(1 \leq f \leq n\right)$ is called the degree of $P$. We write $p R=P_{1}^{e_{1}} P_{2}^{e_{2}} \cdots P_{g}^{e_{g}}$, where $P=P_{1}$ and $P_{i}$ are distinct prime ideals, $e_{i}$ is called the ramification index of $P_{i}$. There exists a remarkable relation among ramification indexes and degrees (see Theorem 3 of page 181 of \cite{I})
\begin{equation}\label{1.4}
\sum_{i=1}^{g} e_{i} f_{i}=n.
\end{equation}
Let $\left\{\alpha_{1}, \alpha_{2}, \cdots \alpha_{n}\right\} \subset R$ be an integral basis for $E/Q, A=P_{1} P_{2}$. Suppose that $P_{1} \cap \mathbb{Z}=p\mathbb{Z}$ and $P_{2} \cap \mathbb{Z}= q\mathbb{Z}$,  then $A \cap \mathbb{Z}= pq\mathbb{Z}$, where $p$ and $q$ are two distinct rational prime numbers.
\begin{lem}\label{lm1.1}

Let
\begin{equation}\label{1.5}
S_{1}=\left\{\sum_{i=1}^{n} a_{i} \alpha_{i} \mid 0 \leq a_{i}<p q, \ a_{i} \in \mathbb{Z},\ 1 \leq i \leq n \right\}.
\end{equation}
Then $S_{1}$ covers a set of coset representatives of $R/A$. Moreover, if the degrees of $P_{1}$  and $P_{2}$ are $n$, then $S_{1}$ is precisely an set of coset representatives of $R/A$.

\end{lem}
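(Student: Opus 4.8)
The plan is to show two things: first, that every coset of $R/A$ has a representative in $S_1$, and second, that when $f(P_1)=f(P_2)=n$ the set $S_1$ has exactly $N(A)$ elements and they are pairwise incongruent modulo $A$. For the covering statement, I would start from an arbitrary $\beta\in R$ and write $\beta=\sum_{i=1}^n b_i\alpha_i$ with $b_i\in\mathbb{Z}$, using that $\{\alpha_1,\dots,\alpha_n\}$ is an integral basis. Then reduce each coefficient: write $b_i=a_i+pq\,c_i$ with $0\le a_i<pq$ and $c_i\in\mathbb{Z}$. The element $\gamma:=\sum a_i\alpha_i$ lies in $S_1$, and $\beta-\gamma=pq\sum c_i\alpha_i$. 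The key point is that $pq\in A$: indeed $A\cap\mathbb{Z}=pq\mathbb{Z}$ as noted just before the lemma (since $P_1\cap\mathbb{Z}=p\mathbb{Z}$, $P_2\cap\mathbb{Z}=q\mathbb{Z}$, and $p\ne q$), so $pq\in A$ and hence $pq\sum c_i\alpha_i\in A$ because $A$ is an ideal of $R$. Therefore $\beta\equiv\gamma\ (\mathrm{mod}\ A)$, which proves that $S_1$ covers a complete set of coset representatives.

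For the second claim, observe that $|S_1|=(pq)^n$ by construction, since the coefficient vectors $(a_1,\dots,a_n)$ range over $\{0,1,\dots,pq-1\}^n$ and distinct vectors give distinct elements of $R$ (integral basis again). On the other hand, $N(A)=N(P_1)N(P_2)$ by multiplicativity of the norm on coprime ideals, and under the hypothesis $f(P_1)=f(P_2)=n$ the relation \eqref{1.4} forces $g=1$ and $e_1=1$ for the primes above $p$ and above $q$, so $N(P_1)=p^n$ and $N(P_2)=q^n$, giving $N(A)=(pq)^n=|S_1|$. Since $S_1$ already surjects onto the $N(A)$-element ring $R/A$ and $|S_1|=N(A)$, the reduction map $S_1\to R/A$ is a bijection; equivalently, no two distinct elements of $S_1$ are congruent mod $A$, so $S_1$ is exactly a set of coset representatives.

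The main obstacle is the cardinality bookkeeping in the second part — specifically, making rigorous that $f(P_1)=f(P_2)=n$ combined with \eqref{1.4} pins down $N(P_i)=p^n$ (resp.\ $q^n$). One must note that $P_1$ appears among the primes $P_1^{e_1}\cdots P_g^{e_g}$ lying over $p$, so $f_1=n$ together with $\sum_{i=1}^g e_i f_i=n$ and $e_i,f_i\ge1$ leaves no room for any other term: $g=1$, $e_1=1$, $f_1=n$, whence $pR=P_1$ and $N(P_1)=p^n$; symmetrically for $q$. Everything else is a routine counting argument, and the covering direction needs only that $A$ is an ideal containing $pq$, which requires no hypothesis on the degrees at all.
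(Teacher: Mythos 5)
Your proposal is correct and follows essentially the same route as the paper: reduce the coefficients of an integral-basis expansion modulo $pq$ and use $pq\in A$ for the covering claim, then use (\ref{1.4}) to force $g=1$, $e_1=1$ when $f_1=f_2=n$. The only cosmetic difference is in finishing the second assertion: the paper concludes $P_1=pR$, $P_2=qR$, hence $A=pqR$, and reuses the fact that $S_1$ is exactly a set of coset representatives mod $pqR$, whereas you count $\lvert S_1\rvert=(pq)^n=N(A)$ and invoke surjectivity onto a finite set of the same cardinality --- equivalent bookkeeping.
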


\begin{proof}

 Since $A=P_{1}P_{2}$,  $P_{1} \cap \mathbb{Z}=p\mathbb{Z}$ and $P_{2} \cap \mathbb{Z}=q\mathbb{Z}$, we have $pq R \subset A$, thus $R/pqR $ maps onto $R/A$. To prove the first assertion, it is enough to show that $S_{1}$ is a set of coset representatives of $R/pq R$. Since $\left\{\alpha_{1}, \alpha_{2}, \ldots \alpha_{n}\right\}$ is an integral  basis and
$$
R=\mathbb{Z}\alpha_{1}+\mathbb{Z} \alpha_{2}+\cdots +\mathbb{Z} \alpha_{n}.
$$
Suppose that $\alpha=\sum_{i=1}^{n} m_{i} \alpha_{i} \in R$, write $m_{i}=a_{i} pq+r_{i}$, where $0 \leq r_{i}<pq$. Clearly
$$
\alpha \equiv \sum_{i=1}^{n} r_{i} \alpha_{i}(\text{mod}\ pqR).
$$
Thus every coset of $ pqR$ contains an element of $ S_{1}$. If   $\sum_{i=1}^{n} r_{i}\alpha_{i}=\sum_{i=1}^{n} r'_{i}\alpha_{i} $ are in $S_{1}$ and in the same coset mod $pqR$, then
$$
\sum_{i=1}^{n}\left(r_{i}-r_{i}^{\prime}\right) \alpha_{i} \equiv 0 (\text{mod}\ \ pqR).
$$
Since $ \alpha_{i}$ are linearly independent, it follows that
$$
r_{i}\equiv r_{i}^{\prime} (\text { mod}\ \  pq)\ \ \text { and } \ \ r_{i}=r_{i}^{\prime},\ \  1 \leq i \leq n .
$$
Next, suppose that the degrees of $P_{1}$ and $P_{2}$ are $n$, then $N\left(P_{1}\right)=p^{n}$ and $N\left(P_{2}\right)=q^{n}$, by (\ref{1.4}) we thus have  $P_{1}=p R$,  $P_{2}=q R$ and $A=pq R$. The second assertion  follows immediately.
\end{proof}

If one replaces $S$ by $S_{1}$ in Table 1, then the successful probability of  decryption is
\begin{equation}\label{1.6}
N(A) / p^{n} q^{n}=p^{f_{1}-n} q^{f_{2}-n},
\end{equation}
where  $f_{1}$ and $f_{2}$ are the  degrees  of $P_{1}$ and $P_{2}$ respectively.

We note that $f_{1}=f_{2}=n$ if and only if $P_{1}=p R$ and
$P_{2}=q R$, in this special case, we may give a numerical explanation. It is easy to see that
$$
\varphi(A)=\varphi(p R) \varphi(q R)=\left(p^{n}-1\right)\left(q^{n}-1\right).
$$
By Theorem \ref{th1}, for any $ a \in \mathbb{Z}$, we have
\begin{equation}\label{1.7}
a^{k\left(p^{n}-1\right)\left(q^{n}-1\right)+1} \equiv a (\text{mod}\ pq),\ \ k \in \mathbb{Z},\ \ k\geq 0.
\end{equation}
Since $S_{1}$ is a set of coset representatives of $R/A,$ $\alpha=\sum_{i=1}^{n}a_{i}\alpha_{i}\in S_{1}$,
We may regard  $\alpha$ as a vector  $\left(a_{1}, a_{2}, \ldots, a_{n}\right) \in \mathbb{Z}_{pq}^{n}$. Let $m =pq$, $1 \leq e<\left(p^{n}-1\right)\left(q^{n}-1\right)$ and $1 \leqslant d<\left(p^{n}-1\right)\left(q^{n}-1\right)$ such that
$$
ed\equiv 1(\text{mod}\ \ (p^{n}-1)(q^{n}-1)).
$$
Then for every input message $\alpha=\left(a_{1}, a_{2}, \cdots,a_{n}\right)$, we use the public key $(m,e) $ and private key $(p,q, d)$  to
 encryption and decryption for each  $a_{i}$ in order, obviously, this is the algorithms given by \cite{T}, we consider these algorithms
 are just a simply repeat of RSA.

The main purpose of this paper is to show that the high dimensional form of  RSA algorithm is a lattice based on cryptosystem in general. To do this, we first establish a relationship  between an algebraic number field $E$ and the Euclidean space $Q^{n}$. Let $\mathbb{R}^{n}$ be the Euclidean space of which is a linear space over $\mathbb{R}$  with the Euclidean  norm $|x|$,
\begin{equation}\label{1.8}
|x|=\left(\sum_{i=1}^{n} x_{i}^{2}\right)^{\frac{1}{2}},\ \  \text { where }\  x^{\prime}=\left(x_{1}, x_{2}, \cdots, x_{n}\right) \in \mathbb{R}^{n}.
\end{equation}
We use the column natation for vector in $\mathbb{R}^{n}$, and $x^{\prime}$ is the transpose  of $x$, which is called a row vector in $\mathbb{R}^{n}$. $Q^{n} \subset \mathbb{R}^{n}$ is a subspace of $\mathbb{R}^{n}.$

Without loss of  generality, an algebraic number  field $E$ of degree $n$ may express as $E=Q(\theta)$, where $\theta$ is an algebraic integer of degree $n$  and $Q(\theta)$ is the field generated by $\theta$ over $Q$.  Let $\phi(x)$ be the minimal polynomial of $\theta$,
\begin{equation}\label{1.9}
\phi(x)=x^{n}-\phi_{n-1} x^{n-1}-\cdots-\phi_{1} x-\phi_{0} \in \mathbb{Z}[x],
\end{equation}
where all $\phi_{i} \in \mathbb{Z}.$ It is known that
\begin{equation}\label{1.10}
E=Q[\theta]=\left\{\sum_{i=0}^{n-1} a_{i} \theta^{i} \mid a_{i} \in Q \right\} .
\end{equation}

We define an one to one correspondence between $E$ and $Q^{n}$ by $\tau$:
\begin{equation}\label{1.11}
\alpha=\sum_{i=0}^{n-1} a_{i} \theta^{i}  \in E \stackrel{\tau}{\longrightarrow}
\overline{\alpha}=\begin{pmatrix}
     a_0 \\
    a_1  \\
     \vdots \\
     a_{n-1}
\end{pmatrix}
\in Q^{n}
\end{equation}
and write $\tau(\alpha)=\overline{\alpha}$ or $\alpha \stackrel{\tau}{\rightarrow} \overline{\alpha}$. In fact $\tau$ is a  homomorphism of additive group from $E$ to $Q^{n}$, because of $\tau(a \alpha)=a \tau(\alpha)$ for all $a\in \mathbb{Q}.$

As usual, the trace and norm mappings from $E$ to $Q$ are denoted by
$$
\operatorname{tr}(\alpha)=\operatorname{tr}_{E/Q}(\alpha),\ \  \text { and } \ \ N(\alpha)=N_{E/Q}(\alpha).
$$
It  is known (see corollary of page 58 of  \cite{N}) that
\begin{equation}\label{1.12}
N(\alpha R)=|N(\alpha)|, \quad \forall \alpha \in R .
\end{equation}
A full rank  lattice $L$ is a discrete addition subgroup of $\mathbb{R}^{n}$, the equivalent expression for $L$ is   ( See \cite{M2} and \cite{Z})
\begin{equation}\label{1.13}
L=L(B)=\left\{B x \mid x \in \mathbb{Z}^{n}\right\} ,
\end{equation}
where $B=\left[\overline{\beta}_{1}, \overline{\beta}_{2}, \cdots, \overline{\beta}_{n}\right]_{n \times n} \in \mathbb{R}^{n \times n}$ is an invertible matrix of $n\times n$ dimension, $B$ is called a generated matrix of $L$. If $L \subset Q^{n}$, we call $L$  a rational lattice, if $L\subset \mathbb{Z}^{n}$, we call $L$  an integer lattice. It is not difficult to see that every ideal of $R$ corresponds an rational lattice, we have

\begin{lem}\label{lm1.2}
Let $A \subset R$ be an ideal and $A \neq 0$, then $\tau(A)$ is a rational lattice.

\end{lem}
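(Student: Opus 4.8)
The plan is to exhibit an explicit generating matrix $B\in Q^{n\times n}$ for $\tau(A)$ and then invoke the characterization (1.13) of a rational lattice. The key point is that $A$, viewed as an additive subgroup of $R$, is free of $\mathbb{Z}$-rank exactly $n$; once a $\mathbb{Z}$-basis of $A$ is in hand, the rest is just a coordinate change under $\tau$.

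First I would recall that $R$ is a free $\mathbb{Z}$-module of rank $n$, since $E/Q$ has degree $n$ and $R$ has an integral basis $\{\alpha_{1},\dots,\alpha_{n}\}$ (already used in Lemma \ref{lm1.1}). Next, to see that $A$ has finite index in $R$, pick any $0\neq\alpha\in A$ and let $x^{k}+c_{k-1}x^{k-1}+\dots+c_{1}x+c_{0}\in\mathbb{Z}[x]$ be its minimal polynomial over $Q$, which is monic with integer coefficients because $\alpha\in R$ and has $c_{0}\neq0$ because $\alpha\neq0$; then
$$
c_{0}=-\alpha\bigl(\alpha^{k-1}+c_{k-1}\alpha^{k-2}+\dots+c_{1}\bigr)\in\alpha R\subseteq A ,
$$
so the nonzero rational integer $m=|c_{0}|$ lies in $A$, giving $mR\subseteq A\subseteq R$. (Alternatively, the introduction already records that $R/A$ is finite.) A subgroup of a free abelian group of rank $n$ is free of rank at most $n$, and being sandwiched between $mR$ and $R$, both of rank $n$, forces the rank of $A$ to be exactly $n$; moreover $A\supseteq mR$ spans $E$ over $Q$, so any $\mathbb{Z}$-basis $\{\beta_{1},\dots,\beta_{n}\}$ of $A$ is simultaneously a $Q$-basis of $E$.

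Finally I would transport everything to $Q^{n}$. Because $\tau$ is an additive bijection with $\tau(a\alpha)=a\tau(\alpha)$ for all $a\in Q$, it is a $Q$-linear isomorphism $E\to Q^{n}$; hence $\tau(A)=\{\sum_{j=1}^{n}x_{j}\overline{\beta}_{j}\mid x_{j}\in\mathbb{Z}\}$ and $\{\overline{\beta}_{1},\dots,\overline{\beta}_{n}\}$ is a $Q$-basis of $Q^{n}$. Setting $B=\left[\overline{\beta}_{1},\overline{\beta}_{2},\dots,\overline{\beta}_{n}\right]\in Q^{n\times n}$, this $B$ is invertible and $\tau(A)=L(B)=\{Bx\mid x\in\mathbb{Z}^{n}\}\subseteq Q^{n}$, i.e. a rational lattice in the sense of (1.13); discreteness needs no separate argument once $B$ is invertible.

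There is no serious obstacle here: the only substantive ingredient is the full-rank claim for $A$, which is immediate once one produces a nonzero rational integer inside $A$ (or simply uses the finiteness of $R/A$ already noted in the introduction). The one point that merits care is that $\tau(R)$ need not be contained in $\mathbb{Z}^{n}$ — the power basis $\{1,\theta,\dots,\theta^{n-1}\}$ need not be an integral basis of $R$ — which is exactly why the conclusion is phrased for a rational lattice rather than an integer lattice.
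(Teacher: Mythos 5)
Your proof is correct and follows essentially the same route as the paper: produce a $\mathbb{Z}$-basis $\{\beta_{1},\dots,\beta_{n}\}$ of $A$ that is simultaneously a $Q$-basis of $E$, apply $\tau$, and take $B=[\overline{\beta}_{1},\dots,\overline{\beta}_{n}]$ as the generating matrix. The only difference is that the paper simply asserts the existence of such an integral basis of $A$, whereas you supply the (correct) justification via $mR\subseteq A\subseteq R$ for a nonzero rational integer $m\in A$.
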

\begin{proof} Let $\left\{\beta_{1}, \beta_{2},\cdots, \beta_{n}\right\} \subset A$ be an integral basis for $E/Q$,
one has
$$
A=\mathbb{Z} \beta_{1}+\mathbb{Z} \beta_{2}+\cdots+\mathbb{Z} \beta_{n}.
$$
It follows that
$$
\tau(A)=\mathbb{Z} \overline{\beta}_{1}+\mathbb{Z}\overline{ \beta}_{2}+\cdots+\mathbb{Z}\overline{ \beta}_{n},
$$
where $ \overline{\beta}_{i}= \tau ( \beta_{i}) \in Q^{n}$. Let $B=[\overline{\beta}_{1},\overline{\beta}_{2},\cdots,\overline{\beta}_{n} ]$, since $ \{ \beta_{1},\beta_{2},\cdots,\beta_{n}\}$  is linearly independent over $Q$, thus $B$ is an invertible matrix, and we have
$$
\tau(A)=L(B)=\{Bx \mid x\in \mathbb{Z}^{n}\}.
$$
The lemma follows at once.
\end{proof}

 Let $L \subset Q^{n}$ is a rational lattice, of which be corresponded by an ideal $A$ in $E$ for some suitable algebraic number field $E$, we call $L$  an ideal lattice. Ideal lattice was first introduced  by Lyubashevsky and Miccancio in \cite{L} in the case of integer lattice,
 here we generalize this notation to the case of rational lattices.  More detail discussion about ideal lattice, we refer to \cite{Z}.

 To give an attainable algorithm for high dimensional RSA, we require the following NC-property for the algebraic number  field E.
\begin{equation}\label{1.14}
\text{NC- property: } \  \ \ \ \ \  \ \  \ \ \ \ \ \  E=Q(\theta) \ \ \ \text { and } \ \ \ \ R=\mathbb{Z}[\theta],\ \ \ \ \ \ \ \ \ \ \ \ \   \ \ \ \ \ \ \ \ \ \ \ \ \ \ \ \
\ \ \ \ \ \ \ \ \ \ \ \ \ \ \ \ \ \   \ \ \ \ \
\end{equation}
where
\begin{equation}\label{1.15}
\mathbb{Z}[\theta]=\{ \sum_{i=0}^{n-1}a_{i}\theta^{i} \mid a_{i}\in \mathbb{Z}, \ 1\leq i\leq n\}.
\end{equation}

Some of well-known algebraic number fields  satisfy the NC-property, we list a few as follows.
\begin{table}[h]
\begin{tabular}{l}
\hline
 $${ \text{\bf{ \text{Algebraic number fields with NC-property}}}} $$\\
\hline
\\
$\bullet $\ \ \text{Quadratic Fields(see Proposition 13.1.1 of \cite{I} )}:  \\
\ \ \ \ \ \ \ \ \ \ \ \ \ \ \ \ \  \ \ \ \ \  \ \  $E=Q(\sqrt{d}),$ where $ d\in\mathbb{Z} $
is a square-free integer
and $d=2,3(\text{mod}\ \ 4).$
\\
$\bullet$\ \  Cyclotemic Fields (see theorem 2.6 of \cite{W} ): \\
\ \ \ \ \ \ \ \ \ \ \ \ \ \ \ \ \  \ \ \ \ \  \ \  $E=Q\left(\xi_{n}\right)$,
where $\xi_{n}=e^{2 \pi i /n}$ is a primitive  $n$-th root  of unity.\\
$\bullet $\ \  Totally Real Algebraic Number Fields (see Proposition 2.16 of \cite{W} ): \\ \ \ \ \ \ \ \ \ \ \ \ \ \ \ \ \ \ \ \  \ \ \ \ \
$E=Q(\xi_{n}+\xi_{n}^{-1}), $    and $E \subset \mathbb{R}$ is the maximal real subfield of $Q(\xi_{n} )$.\\
\hline
\end{tabular}
\caption{Algebraic number fields with NC-property}
\label{Table 1}
\end{table}

\section{Ideal Matrices}

Suppose that $\theta$ is an algebraic integer of degree $n$, $ \phi(x)=x^{n}-\phi_{n-1} x^{n-1}-\cdots-\phi_{1} x-\phi_{0} \in \mathbb{Z}[x]$ is the minimal polynomial of $\theta$, thus $\phi(x)$ is irreducible. Let $ \theta=\theta_{0}, \theta_{1}, \theta_{2},\cdots,\theta_{n-1}$ be $n$ different roots of $\phi(x)$, the Vandermonde matrix of $\phi(x)$ is defined by
 \begin{equation}\label{2.1}
 V=V_{\phi}=\left[\theta_{j}^{i}\right]_{0 \leq i, j \leq n-1}, \ \ \text{and}\ \ \Delta=\text{det}( V_{\phi}) \neq 0 .
\end{equation}

According to $\phi(x)$, we denote  the rotation matrix or adjoint matrix (see page 116 of \cite{M}) by
\begin{equation}\label{2.2}
 H=H_{\phi}=
\left(
\begin{array}{ccc|c}
0 & \cdots & 0 & \phi_0\\
\hline
& & & \phi_1\\
& I_{n-1} & & \vdots \\
& & & \phi_{n-1} \\
\end{array}
\right)
\in \mathbb{Z}^{n\times n},
\end{equation}
where $ I_{n-1} $  is the unit matrix of $n-1$ dimension.

\begin{Definition}\label{df2.1}
An  ideal  matrix $H^{*}(\overline{f})$  generated by the input vector $\overline{f} \in \mathbb{R}^{n} $ is defined by
  \begin{equation}\label{2.3}
 H^{*}(\overline{f})=\left[ \ \overline{f}, H\overline{f}, \cdots, H^{n-1}\overline{f} \ \right]_{n \times n} \in \mathbb{R}^{n\times n}
\end{equation}
  and all ideal matrices are  denoted by
\begin{equation}\label{2.4}
M_{\mathbb{R}}^{*}=\left\{H^{*}(\overline{f}) \mid \overline{f} \in \mathbb{R}^{n}\right\}\ \  \text { and } \ M_{Q}^{*}=\left\{H^{*}(\overline{f}) \mid \overline{f} \in Q^{n}\right\}.
\end{equation}
\end{Definition}

\begin{Definition}\label{df2.2}  For any two vectors $\overline{f}$ and $\overline{g}$ in $\mathbb{R}^{n}$, the $\phi$-conventional product is defined by
\begin{equation}\label{2.5}
\overline{f} \otimes \overline{g}=H^{*}(\overline{f} ) \overline{g}
\end{equation}
and   the m-multi product is denoted by
\begin{equation}\label{2.6}
\overline{f}^{\otimes m} =\overbrace{\overline{f} \otimes \overline{f} \otimes \cdots \otimes \overline{f}}^{m},\ \  m \in \mathbb{Z}, \ \ m \geq 1 .
\end{equation}
\end{Definition}

 \begin{rem}\label{rm1}
 If $ \phi(x)=x^{n}-1$, then $H_{\phi}$ is the classical circulant matrix (see \cite{D-a}), conventional product with circulant matrix was first proposed by Hoffstein, Pipher and Silverman in \cite{H}, which  plays  a key role in their cryptosystem. In \cite{Z2},  we generalized this definition with more general rotation matrices.
  \end{rem}

  By (\ref{2.3}), $H^{*}(\overline{f})=0$ is a zero  matrix if and only if $\overline{f}=0$ is a zero vector, and $H^{*}(\overline{f}+\overline{g})=H^{*}(\overline{f})+H^{*}(\overline{g})$, then $H^{*}(\overline{f})=H^{*}(\overline{g})$ if and only if $\overline{f}=\overline{g}$. Thus we may regard $H^{*} :\mathbb{R}^{n} \rightarrow \mathrm{M}_{\mathbb{R}}^{*}$ as an one to one correspondence, which is also a homomorphism of Abel group.

The main aim of this subsection is to show the $Q^{n}$ is a field under the $\phi$-conventional product and $M_{Q}^{*}$ is also a
field under the ordinary additive and product of matrices, both of them are isomorphic to the algebraic number field $E=Q(\theta)$.
To do this, we require some basic properties of the ideal  matrices.

Let $\overline{e}_{1}, \overline{e}_{2}, \cdots, \overline{e}_{n}$ be the unit vectors of  $\mathbb{R}^{n}$, namely
\begin{equation}\label{2.7}
\overline{e}_{1}=\left(\begin{array}{c}
1 \\
0\\
\vdots \\
0
\end{array}\right), \overline{e}_{2}=\left(\begin{array}{l}
0 \\
1 \\
\vdots \\
0
\end{array}\right), \cdots, \quad \overline{e}_{n}=\left(\begin{array}{l}
0 \\
0 \\
\vdots \\
1
\end{array}\right) .
\end{equation}
\begin{lem}\label{lm2.1}
 Let $\tau$ be defined by (\ref{1.11}), then we have
\begin{equation}\label{2.7a}
\left\{\begin{array}{lll}
\tau\left(\theta^{k}\right)=\overline{e}_{k+1}, \ \ 0 \leq k \leq n-1 \\
H^{*}\left(\overline{e}_{k}\right)=H^{k-1}, \ \ 1 \leqslant k \leqslant n .
\end{array}\right.
\end{equation}
\end{lem}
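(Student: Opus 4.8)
The plan is to verify the two identities in \eqref{2.7a} directly from the relevant definitions. Both are essentially bookkeeping facts about the correspondence $\tau$ and the rotation matrix $H$, so the proof should be short.

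First I would establish $\tau(\theta^k)=\overline{e}_{k+1}$ for $0\le k\le n-1$. This is immediate from the definition \eqref{1.11} of $\tau$: writing $\theta^k = \sum_{i=0}^{n-1}a_i\theta^i$ with $a_k=1$ and $a_i=0$ for $i\ne k$ (which is legitimate precisely because $1,\theta,\dots,\theta^{n-1}$ is a $Q$-basis of $E=Q[\theta]$, cf. \eqref{1.10}), the image vector has a $1$ in the $(k+1)$-st coordinate and zeros elsewhere, i.e.\ it equals $\overline{e}_{k+1}$.

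Next I would prove $H^{*}(\overline{e}_k)=H^{k-1}$ for $1\le k\le n$. By the definition \eqref{2.3} of the ideal matrix, $H^{*}(\overline{e}_k)=\bigl[\,\overline{e}_k,\ H\overline{e}_k,\ \dots,\ H^{n-1}\overline{e}_k\,\bigr]$, so it suffices to show that the $j$-th column $H^{j-1}\overline{e}_k$ of $H^{*}(\overline{e}_k)$ coincides with the $j$-th column of $H^{k-1}$, for $1\le j\le n$. Equivalently, I must check that $H^{j-1}\overline{e}_k = H^{k-1}\overline{e}_j$ for all $j,k$; but the $j$-th column of any matrix $M$ is $M\overline{e}_j$, so the $j$-th column of $H^{k-1}$ is exactly $H^{k-1}\overline{e}_j$, and the claim reduces to the commutation-type identity $H^{j-1}\overline{e}_k = H^{k-1}\overline{e}_j$. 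The cleanest route here is induction using the single-step action of $H$ on the basis: from the explicit block form \eqref{2.2} one reads off $H\overline{e}_i = \overline{e}_{i+1}$ for $1\le i\le n-1$ (the $I_{n-1}$ block shifts coordinates up) and $H\overline{e}_n = (\phi_0,\phi_1,\dots,\phi_{n-1})'$. Iterating, $H^{k-1}\overline{e}_1 = \overline{e}_k$ for $1\le k\le n$, and therefore $H^{k-1}\overline{e}_j = H^{k-1}H^{j-1}\overline{e}_1 = H^{k+j-2}\overline{e}_1 = H^{j-1}H^{k-1}\overline{e}_1 = H^{j-1}\overline{e}_k$, which is the desired identity; in particular the $j$-th column of $H^{*}(\overline{e}_k)$ is $H^{j-1}\overline{e}_k = H^{j-1}H^{k-1}\overline{e}_1$, matching the $j$-th column $H^{k-1}\overline{e}_j = H^{k-1}H^{j-1}\overline{e}_1$ of $H^{k-1}$ since the powers of $H$ commute.

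I do not anticipate a serious obstacle: the only point requiring a little care is reading the single-step action $H\overline{e}_i=\overline{e}_{i+1}$ correctly off the partitioned matrix \eqref{2.2}, since the placement of the $I_{n-1}$ block and the last column $(\phi_0,\dots,\phi_{n-1})'$ must be matched to the column index being hit. Once that is pinned down, both parts of \eqref{2.7a} follow by the elementary computations above. (One may also note, as a consistency check, that this identifies $H^{*}(\overline{e}_k)$ with the matrix of multiplication by $\theta^{k-1}$ on $E$ in the basis $1,\theta,\dots,\theta^{n-1}$, which is the conceptual content behind the lemma.)
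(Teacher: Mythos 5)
Your proof is correct and rests on the same key observation as the paper's, namely the shift action $H\overline{e}_i=\overline{e}_{i+1}$ for $1\le i\le n-1$. The paper organizes the second identity as a one-line induction on $k$, factoring $H$ out of all columns at once via $H^{*}(\overline{e}_k)=[H\overline{e}_{k-1},\dots,H^{n}\overline{e}_{k-1}]=H\,H^{*}(\overline{e}_{k-1})$, whereas you compare the matrices column by column through the identity $H^{j-1}\overline{e}_k=H^{k-1}\overline{e}_j$; this is only a difference in bookkeeping, not in substance.
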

\begin{proof}

$\quad \tau\left(\theta^{k}\right)=\overline{e}_{k+1}$ follows directly from the definition of $\tau$. We use induction to prove $H^{*}\left(\overline{e}_{k}\right)=H^{k-1}$.  It is easy to see that
$H^{*}\left(\overline{e}_{1}\right)=I_{n}$, the unit matrix of $n$ dimension.  Suppose that $H^{*}\left(\overline{e}_{k-1}\right)$ $=H^{k-2}$, for $k\geq 2$, note that $\overline{e}_{k}=H \overline{e}_{k-1}$, it follows that
\begin{align*}
H^{*}\left(\overline{e}_{k}\right)&=\left[H \overline{e}_{k-1}, H^{2} \overline{e}_{k-1}, \cdots, H^{n} \overline{e}_{k-1}\right]\\
&= H\left[ \overline{e}_{k-1}, H \overline{e}_{k-1}, \cdots, H^{n-1} \overline{e}_{k-1}\right]\\
&=HH^{*}(\overline{e}_{k-1})= HH^{k-2}=H^{k-1}.
\end{align*}
The lemma follows immediately.
\end{proof}

Since $\phi(x)$ is the characteristic polynomial of $H$, by Hamilton-Cayley theorem, we have
\begin{equation}\label{2.9}
\phi(H)=0, \ \ \text{or}\  H^{n}=\phi_{0}+\phi_{1} H+\cdots+\phi_{n-1} H^{n-1}.
\end{equation}
Therefore, all the rotation matrices $H^{k}(k \geq 0)$ are the ideal matrices, especially, the unit matrix $I_{n}=H^{*}\left(\overline{e}_{1}\right)$ is an ideal matrix.

Let $\mathbb{R}[x]$ be the polynomials ring and $\mathbb{R}(x) /\langle\phi(x)\rangle$ be the quotient ring, where $\langle\phi(x)\rangle$ is the principal ideal generated by $\phi(x)$ in $\mathbb{R}[x]$. We establish an one to one correspondence $t$ between $\mathbb{R}^{n}$ and $\mathbb{R}[x]/ \langle\phi(x)\rangle$ by
\begin{equation}\label{2.10}
\overline{f}=\left(\begin{array}{c}
f_{0} \\
f_{1} \\
\vdots \\
f_{n-1}
\end{array}\right)\in \mathbb{R}^{n} \stackrel{t}{\longrightarrow} f(x)=f_{0}+f_{1} x+\cdots+f_{n-1} x^{n-1} \in \mathbb{R}[x]/ \langle\phi(x)\rangle
\end{equation}
and write $t(\overline{f})=f(x)$, or $t^{-1}(f(x))=\overline{f}$.

\begin{lem} \label{lm2.2}
For any $\overline{f} \in \mathbb{R}^{n}$, the ideal matrix $H^{*}(\overline{f})$ is given by
 \begin{equation}\label{2.11}
 H^{*}( \overline{f})=f(H)=f_{0}I_{n}+f_{1}H+\cdots+f_{n-1}H^{n-1}.
 \end{equation}
 Moreover, if $F(x) \in \mathbb{R}[x]$ and $F(x)\equiv f(x)(\text{mod}\ \phi(x))$, then $f(H)=F(H)$.
 \end{lem}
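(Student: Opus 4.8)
The plan is to prove the two assertions of Lemma \ref{lm2.2} in sequence, using the previously established Lemma \ref{lm2.1} together with the linearity of the map $H^{*}$ and the Hamilton--Cayley relation \eqref{2.9}.

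First I would prove the identity $H^{*}(\overline{f}) = f(H)$. The idea is to write $\overline{f} = \sum_{k=0}^{n-1} f_{k}\,\overline{e}_{k+1}$, the expansion of $\overline{f}$ in the standard basis. Since $H^{*}$ is additive and satisfies $H^{*}(a\overline{g}) = a\,H^{*}(\overline{g})$ for scalars $a$ (this is immediate from \eqref{2.3}, because each column $H^{j}\overline{f}$ is linear in $\overline{f}$), we get
\begin{equation}\label{plan-eq-1}
H^{*}(\overline{f}) = \sum_{k=0}^{n-1} f_{k}\, H^{*}(\overline{e}_{k+1}).
\end{equation}
By Lemma \ref{lm2.1}, $H^{*}(\overline{e}_{k+1}) = H^{k}$ for $0 \le k \le n-1$, so the right side of \eqref{plan-eq-1} is exactly $\sum_{k=0}^{n-1} f_{k} H^{k} = f(H)$, which is \eqref{2.11}.

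For the second assertion, suppose $F(x) \in \mathbb{R}[x]$ with $F(x) \equiv f(x) \pmod{\phi(x)}$, so $F(x) = f(x) + \phi(x) q(x)$ for some $q(x) \in \mathbb{R}[x]$. Substituting $H$ and using that polynomial evaluation at a fixed matrix is a ring homomorphism $\mathbb{R}[x] \to \mathbb{R}^{n\times n}$, we obtain $F(H) = f(H) + \phi(H) q(H) = f(H)$, since $\phi(H) = 0$ by \eqref{2.9}. Combined with the first part, this also shows $H^{*}(\overline{f}) = F(H)$ for every representative $F$ of the residue class, so the ideal matrix is well defined as a polynomial in $H$ modulo $\langle \phi(x)\rangle$.

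I do not anticipate a serious obstacle here; the only point requiring a little care is justifying the scalar-linearity and additivity of $H^{*}$ so that the basis expansion \eqref{plan-eq-1} is legitimate, but this was already noted in the text following Definition \ref{df2.1}. The Hamilton--Cayley step is exactly \eqref{2.9}, and the homomorphism property of $F \mapsto F(H)$ is standard. Thus the proof is essentially a clean assembly of Lemma \ref{lm2.1} and \eqref{2.9}.
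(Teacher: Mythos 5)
Your proposal is correct and follows essentially the same route as the paper: expand $\overline{f}$ in the standard basis, use the linearity of $H^{*}$ together with Lemma \ref{lm2.1} to obtain $f(H)$, and invoke the Hamilton--Cayley relation \eqref{2.9} for the second assertion. Your explicit justification of scalar-linearity and of the homomorphism property of $F \mapsto F(H)$ is slightly more careful than the paper's, but the argument is the same.
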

 \begin{proof}
 Writing $\overline{f}=f_{0} \overline{e}_{1}+f_{1} \overline{e}_{2}+\cdots+f_{n-1} \overline{e}_{n}$, by Lemma \ref{lm2.1}, we have
  \begin{align*}
 H^{*}( \overline{f})&=f_{0}H^{*}( \overline{e}_{1})+f_{1}H^{*}( \overline{e}_{2})+\cdots+f_{n-1}H^{*}(\overline{ e}_{n})\\
 &=f_{0}I_{n}+f_{1}H+\cdots+f_{n-1}H^{n-1}=f(H) .
 \end{align*}
 Suppose that $ F(x) \equiv f(x)(\text{mod}\ \phi(x))$, by (\ref{2.9}) we have $f(H)=F(H)$ immediately.

 \end{proof}

\begin{lem} \label{lm2.3}  Let $\overline{f}$ and $\overline{g}$ be two vectors in $\mathbb{R}^{n}$, and $f(x), g(x)$
be the corresponding polynomials respectively, then we have
 \begin{equation}\label{2.12}
  t(\overline{f} \otimes \overline{g}) \equiv f(x) g(x) (\text{mod}\ \  \phi(x)) .
\end{equation}
 \end{lem}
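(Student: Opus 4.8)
The plan is to reduce the claimed congruence to the already-established identification of ideal matrices with polynomial evaluations at $H$, provided by Lemma \ref{lm2.2}, together with the Hamilton--Cayley relation (\ref{2.9}). First I would unwind the left-hand side: by Definition \ref{df2.2}, $\overline{f} \otimes \overline{g} = H^{*}(\overline{f})\,\overline{g}$, and by Lemma \ref{lm2.2} this equals $f(H)\,\overline{g}$. Writing $\overline{g} = g_{0}\overline{e}_{1} + g_{1}\overline{e}_{2} + \cdots + g_{n-1}\overline{e}_{n}$ and using $\overline{e}_{k+1} = H^{k}\overline{e}_{1}$ (from Lemma \ref{lm2.1}, since $H^{*}(\overline{e}_{k})=H^{k-1}$ applied to $\overline{e}_1$, or directly $\overline{e}_{k}=H\overline{e}_{k-1}$), I get
\[
\overline{f}\otimes\overline{g} = f(H)\Bigl(\sum_{k=0}^{n-1} g_{k} H^{k}\Bigr)\overline{e}_{1} = \bigl(f(H)g(H)\bigr)\overline{e}_{1}.
\]
So the product vector is exactly the image of $\overline{e}_1$ under the matrix $f(H)g(H)$.

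Next I would identify $f(H)g(H)$ as a polynomial in $H$ of degree at most $n-1$. In the polynomial ring, $f(x)g(x)$ has degree up to $2n-2$; dividing by $\phi(x)$ write $f(x)g(x) = \phi(x)q(x) + r(x)$ with $\deg r < n$, so that $r(x) \equiv f(x)g(x) \pmod{\phi(x)}$. Evaluating at $H$ and invoking $\phi(H)=0$ from (\ref{2.9}) gives $f(H)g(H) = r(H)$. Then by the "moreover" part of Lemma \ref{lm2.2} (or by Lemma \ref{lm2.2} applied directly to $\overline{r}=t^{-1}(r(x))$), $r(H) = H^{*}(\overline{r})$. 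Applying this ideal matrix to $\overline{e}_1$ and using $H^{*}(\overline{r})\overline{e}_1 = \overline{r}$ (the first column of $H^{*}(\overline{r})$ is $\overline{r}$ by (\ref{2.3})), I conclude $\overline{f}\otimes\overline{g} = \overline{r}$, whence $t(\overline{f}\otimes\overline{g}) = r(x) \equiv f(x)g(x)\pmod{\phi(x)}$, which is precisely (\ref{2.12}).

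I do not anticipate a serious obstacle here; the statement is essentially the assertion that the map $t$ (equivalently $H^{*}$) intertwines the $\phi$-conventional product with multiplication in $\mathbb{R}[x]/\langle\phi(x)\rangle$, and all the structural ingredients — the identity $H^{*}(\overline{f})=f(H)$, the reduction relation $\phi(H)=0$, and the fact that the first column of $H^{*}(\overline{f})$ recovers $\overline{f}$ — are already in hand. The one point deserving slight care is making sure the polynomial-degree bookkeeping is consistent: $f(H)g(H)$ is literally a sum of powers $H^{i}$ with $i$ up to $2n-2$, and one must legitimately collapse it modulo $\phi(H)=0$ to a polynomial of degree $<n$ before reading off the corresponding vector via $t^{-1}$; this is exactly what the second sentence of Lemma \ref{lm2.2} licenses. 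An alternative, slightly more computational route would be to verify (\ref{2.12}) first for $\overline{g}=\overline{e}_{k}$ (where it reduces to $t(H^{k-1}\overline{f}) \equiv x^{k-1}f(x)$, i.e. the statement that multiplication by $H$ corresponds to multiplication by $x$ modulo $\phi(x)$) and then extend by $\mathbb{R}$-linearity in $\overline{g}$; I would mention this as a remark but prefer the cleaner operator-identity argument above.
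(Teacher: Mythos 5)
Your proof is correct; the difference from the paper's is one of organization rather than substance, but it is a genuine reorganization worth noting. The paper proves the equivalent identity $t^{-1}(f(x)g(x))=\overline{f}\otimes\overline{g}$ by first computing $xg(x)$ explicitly in $\mathbb{R}[x]/\langle\phi(x)\rangle$ (reducing $x^{n}$ via $\phi$) to get $t^{-1}(xg(x))=H\overline{g}$, iterating to $t^{-1}(x^{k}g(x))=H^{k}\overline{g}$, and then summing over the monomials of $f$ --- that is, it decomposes $\overline{f}$ and lets the resulting powers of $H$ act on $\overline{g}$. You instead decompose $\overline{g}$ as $g(H)\overline{e}_{1}$, obtain the operator identity $\overline{f}\otimes\overline{g}=f(H)g(H)\overline{e}_{1}$, reduce the product polynomial by the division algorithm, and finish with $\phi(H)=0$ from (\ref{2.9}) together with the observation that the first column of $H^{*}(\overline{r})$ is $\overline{r}$. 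Your route leans more heavily on Lemma \ref{lm2.2} and Hamilton--Cayley and avoids any coefficient-level computation, at the cost of the remainder bookkeeping you correctly flag; the paper's route is more self-contained, essentially re-deriving inside the proof the fact that multiplication by $x$ modulo $\phi(x)$ acts on coefficient vectors as $H$. All the steps you use ($H^{*}(\overline{f})=f(H)$, $\overline{e}_{k+1}=H^{k}\overline{e}_{1}$, $H^{*}(\overline{r})\overline{e}_{1}=\overline{r}$) are legitimately available at this point with no circularity. Amusingly, the ``more computational'' alternative you sketch at the end (verify for $\overline{g}=\overline{e}_{k}$ and extend by linearity in $\overline{g}$) is the mirror image of what the paper actually does, which is to verify for $\overline{f}=\overline{e}_{k+1}$ and extend by linearity in $\overline{f}$.
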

\begin{proof} Since $t$ is a bijection, it is suffice to show that
\begin{equation}\label{2.13}
t^{-1}(f(x) g(x))=\overline{f} \otimes \overline{g}.
\end{equation}
 Let $g(x)=g_{0}+g_{1} (x)+\cdots+g_{n-1} x^{n-1} \in \mathbb{R}[x] /\langle \phi(x)\rangle$, then
\begin{align*}
x g(x)&=g_{0} x+\cdots+g_{n-1}x^{n}\\
&=g_{n-1}\phi_{0}+(g_{0}+\phi_{1}g_{n-1} )x+\cdots +(g_{n-2}+\phi_{n-1}g_{n-1}  )x^{n-1}.
\end{align*}
It follows that
$$
t^{-1}(x g(x))=H t^{-1}(g(x))=H \overline{g}.
$$
More general, we have
\begin{equation}\label{2.13}
t^{-1}\left(x^{k} g(x)\right)=H^{k} t^{-1}(g(x))=H^{k}\overline{g},\quad 0 \leq  k \leq n-1.
\end{equation}
Let $f(x)=f_{0}+f_{1} x+\cdots+f_{n-1} x^{n-1}$, then
$$
\begin{aligned}
t^{-1}(f(x) g(x)) =\sum_{k=0}^{n-1} f_{k} t^{-1}\left(x^{k} g(x)\right) =\sum_{k=0}^{n-1} f_{k} H^{k} \overline{g} =H^{*}(\overline{f}) \overline{g}= \overline{f}\otimes \overline{g}.
\end{aligned}
$$
The lemma follows immediately.
\end{proof}

\begin{lem} \label{lm2.4}
For any two vectors $
\overline{f}=\left(\begin{array}{c}
f_{0} \\
f_{1} \\
\vdots \\
f_{n-1}
\end{array}\right)\in \mathbb{R}^{n},\ \ \overline{g}=\left(\begin{array}{c}
g_{0} \\
g_{1} \\
\vdots \\
g_{n-1}
\end{array}\right)\in \mathbb{R}^{n},
$
we have the following properties for ideal matrices:

 (\text{i}) \ \ $H^{*}(\overline{f}) H^{*}(\overline{g})=H^{*}\left(\overline{g}) H^{*}(\overline{f}\right) ;$

 (\text{ii}) \ \ $H^{*}(\overline{f}) H^{*}(\overline{g})=H^{*}( H^{*}(\overline{f}) \overline{g}) ;$

 (\text{iiii}) \ \  $H^{*}(\overline{f})=V_{\phi}^{-1} \operatorname{diag}\left\{f\left(\theta_{0}\right), f\left(\theta_{1}\right), \cdots, f\left(\theta_{n-1}\right)\right\} V_{\phi}$;

 (\text{iv})\ \  $\operatorname{det}\left(H^{*}(\overline{f})\right)=\prod_{i=0}^{n-1} f\left(\theta_{i}\right)$;

 (\text{v})\ \  If $\overline{f} \in Q^{n}$,  $\overline{f} \neq 0$, then $H^{*}(\overline{f})$ is an invertible matrix and
$$
\left(H^{*}(\overline{f})\right)^{-1}=H^{*}(\overline{u}),
$$
where $u(x) \in Q[x]$ is the unique polynomial such that
$u(x) f(x) \equiv 1 (\text{mod}\  \phi(x))$ in $Q[x]$.
\end{lem}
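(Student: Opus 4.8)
The plan is to derive all five assertions from Lemma~\ref{lm2.2} and Lemma~\ref{lm2.3}, which already translate the ideal matrix $H^{*}(\overline{f})$ into the polynomial matrix $f(H)$ and the $\phi$-conventional product $\overline{f}\otimes\overline{g}$ into the product $f(x)g(x)$ inside $\mathbb{R}[x]/\langle\phi(x)\rangle$; only part (iii) requires genuinely new work. For (i) I would just observe that by Lemma~\ref{lm2.2} both $H^{*}(\overline{f})=f(H)$ and $H^{*}(\overline{g})=g(H)$ are polynomials in the single matrix $H$, and any two polynomials in a fixed matrix commute, so $H^{*}(\overline{f})H^{*}(\overline{g})=f(H)g(H)=g(H)f(H)=H^{*}(\overline{g})H^{*}(\overline{f})$. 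For (ii), note that $H^{*}(\overline{f})\,\overline{g}=\overline{f}\otimes\overline{g}$ by Definition~\ref{df2.2}; by Lemma~\ref{lm2.3} we have $t(\overline{f}\otimes\overline{g})\equiv f(x)g(x)\pmod{\phi(x)}$, so the ``moreover'' part of Lemma~\ref{lm2.2} gives $H^{*}(\overline{f}\otimes\overline{g})=(fg)(H)=f(H)g(H)=H^{*}(\overline{f})H^{*}(\overline{g})$, which is exactly (ii).

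For (iii) and (iv), since $H^{*}(\overline{f})=f(H)$ it suffices to diagonalize $H$ itself. The polynomial $\phi(x)$ is the characteristic polynomial of the companion matrix $H$ and, being irreducible over $Q$ (hence separable in characteristic zero), has $n$ distinct roots $\theta_{0},\dots,\theta_{n-1}$ — which is exactly the statement $\Delta=\det V_{\phi}\neq0$ in~(\ref{2.1}). A direct computation with the companion form shows that the Vandermonde matrix $V_{\phi}$ (in the orientation fixed by~(\ref{2.1})) conjugates $H$ to the diagonal matrix with entries $\theta_{0},\dots,\theta_{n-1}$; applying the polynomial $f$ to both sides and using $f(V_{\phi}HV_{\phi}^{-1})=V_{\phi}f(H)V_{\phi}^{-1}$ yields $H^{*}(\overline{f})=V_{\phi}^{-1}\operatorname{diag}\{f(\theta_{0}),\dots,f(\theta_{n-1})\}V_{\phi}$, which is (iii). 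Taking determinants in (iii), the two conjugating factors cancel and $\det\bigl(\operatorname{diag}\{f(\theta_{0}),\dots,f(\theta_{n-1})\}\bigr)=\prod_{i=0}^{n-1}f(\theta_{i})$, giving (iv).

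For (v), the key point is that $\phi(x)$ is irreducible over $Q$, so $Q[x]/\langle\phi(x)\rangle$ is a field. If $\overline{f}\in Q^{n}$ and $\overline{f}\neq0$, then $\deg f\le n-1<\deg\phi$ forces $f(x)\not\equiv0\pmod{\phi(x)}$, hence $f(x)$ is a unit in that field, so there is a unique $u(x)\in Q[x]$ with $\deg u<n$ and $u(x)f(x)\equiv1\pmod{\phi(x)}$ (two such polynomials would differ by a multiple of $\phi(x)$ of degree $<n$, hence be equal). Now part (ii) and Lemma~\ref{lm2.3} give $H^{*}(\overline{u})H^{*}(\overline{f})=H^{*}(\overline{u}\otimes\overline{f})$, and since $t(\overline{u}\otimes\overline{f})\equiv u(x)f(x)\equiv1\pmod{\phi(x)}$ we get $\overline{u}\otimes\overline{f}=t^{-1}(1)=\overline{e}_{1}$, so $H^{*}(\overline{u}\otimes\overline{f})=H^{*}(\overline{e}_{1})=I_{n}$ by Lemma~\ref{lm2.1}. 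Hence $H^{*}(\overline{f})$ is invertible with inverse $H^{*}(\overline{u})$. (Invertibility by itself also follows from (iv), since $\prod_{i}f(\theta_{i})=N_{E/Q}(f(\theta))\neq0$ because $f(\theta)\neq0$ in $E$.)

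The only genuinely non-routine step is (iii): one has to verify directly that the Vandermonde matrix diagonalizes the companion matrix $H$ with eigenvalues $\theta_{0},\dots,\theta_{n-1}$, keeping the transpose/orientation convention for $V_{\phi}$ consistent with~(\ref{2.1}). Once (iii) is in hand, (iv) is immediate, and (i), (ii), (v) are short translations via Lemmas~\ref{lm2.1}--\ref{lm2.3}.
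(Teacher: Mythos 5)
Your proposal is correct and follows essentially the same route as the paper: (i) and (ii) via $H^{*}(\overline{f})=f(H)$ and Lemma \ref{lm2.3}, (iii) by diagonalizing the companion matrix $H$ with the Vandermonde matrix (the paper simply cites Theorem 3.5 of the circulant-matrix reference where you propose a direct verification), (iv) by taking determinants, and (v) via irreducibility of $\phi$ and the B\'ezout identity giving $\overline{u}\otimes\overline{f}=\overline{e}_{1}$. The only cosmetic difference is that you spell out the separability of $\phi$ and the orientation issue for $V_{\phi}$, which the paper leaves implicit.
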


\begin{proof}
By Lemma \ref{lm2.2}, we have
$$
H^{*}(\overline{f})  H^{*}(\overline{g})=f(H) g(H)=g(H) f(H)=H^{*}(\overline{g})  H^{*}(\overline{f}) .
$$
To  prove (ii), we write $H^{*}(\overline{f}) \overline{g}=\overline{f} \otimes \overline{g}$, it follows that
$$
H^{*}\left(H^{*}(\overline{f}) \overline{g}\right)=H^{*}(\overline{f} \otimes \overline{g})=f(H) g(H)=H^{*}(\overline{f}) \cdot H^{*} ( \overline{g}).
$$
By theorem 3.5 of \cite{D-a}, we have
\begin{equation}\label{2.15}
H=V_{\phi}^{-1} \operatorname{diag}\left\{\theta_{0}, \theta_{1}, \cdots, \theta_{n- 1}\right\} V_{\phi}
\end{equation}
It follows that
$$
H^{*}(\overline{f})=f(H)=V^{-1} _{\phi}\operatorname{diag}\left\{f\left(\theta_{0}\right), f\left(\theta_{1}\right), \cdots, f\left(\theta_{n-1}\right) \right\}  V_{\phi}.
$$
Since $\operatorname{diag}\left\{f\left(\theta_{0}\right), f\left(\theta_{1}\right), \cdots ,f\left(\theta_{n-1}\right)\right\}$ is a diagonal matrix, we have
$$
\begin{aligned}
\operatorname{det}\left(H^{*}(\overline{f})\right) =\operatorname{det}\left(\operatorname{diag}\left\{f\left(\theta_{0}\right), f\left(\theta_{1}\right), \cdots, f\left(\theta_{n-1}\right)\right\}\right) =\prod_{i=0}^{n-1} f\left(\theta_{i}\right) .
\end{aligned}
$$
To show that the last assertion, since $\overline{f} \in Q^{n}, \overline{f} \neq 0$, and $\phi(x)$ is an irreducible polynomial, thus we have $(f(x), \phi(x))=1$ in $Q[x]$, There are $u(x) \in Q[x]$ and $v(x) \in Q[x]$ such that
$$
u(x) f(x)+v(x) \phi(x)=1.
$$
By (\ref{2.13}) and noting that $t^{-1}(1)=\overline{e}_{1} \in \mathbb{R}^{n}$, we have $\overline{u} \otimes \overline{f}=\overline{e}_{1}$. It follows that
$$
H^{*}(\overline{u}) \cdot H^{*}(\overline{f})=H^{*}(\overline{e}_{1})=I_{n} .
$$
We complete the proof of Lemma.
\end{proof}

Next, we discuss the algebraic number field $E=Q(\theta)$, recall $\tau$ is an one to one correspondence between $E$ and $Q^{n}$.

\begin{lem} \label{lm2.5}
Far any two elements $\alpha$ and $ \beta$  in $E$, we have
\begin{equation}\label{2.16}
\tau( \alpha\beta)=\tau( \alpha)\otimes \tau( \beta)=\overline{\alpha}\otimes \overline{\beta}.
\end{equation}
\end{lem}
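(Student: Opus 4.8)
The plan is to translate the identity into polynomial language via the bijection $t$ of (\ref{2.10}) and then invoke Lemma \ref{lm2.3}. Write $\alpha=\sum_{i=0}^{n-1}a_i\theta^i$ and $\beta=\sum_{j=0}^{n-1}b_j\theta^j$ in the power basis, so that $\overline{\alpha}=\tau(\alpha)$ corresponds under $t$ to the polynomial $a(x)=\sum_{i=0}^{n-1}a_ix^i$ and $\overline{\beta}$ to $b(x)=\sum_{j=0}^{n-1}b_jx^j$; note that $\alpha=a(\theta)$ and $\beta=b(\theta)$ by the very definition of $\tau$ in (\ref{1.11}). The key point is that forming $\tau(\alpha\beta)$ — i.e. rewriting the product $\alpha\beta$ back in the basis $1,\theta,\dots,\theta^{n-1}$ — is exactly the operation of multiplying $a(x)$ and $b(x)$ as polynomials and then reducing modulo $\phi(x)$, and this is where the defining relation $\phi(\theta)=0$ enters.

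First I would make that observation precise. Dividing in $Q[x]$ gives $a(x)b(x)=q(x)\phi(x)+c(x)$ with $\deg c<n$; since $\phi(\theta)=0$ we get $\alpha\beta=a(\theta)b(\theta)=q(\theta)\phi(\theta)+c(\theta)=c(\theta)$, hence $\alpha\beta=\sum_{i=0}^{n-1}c_i\theta^i$ and therefore $\tau(\alpha\beta)=\overline{c}=t^{-1}(c(x))$, where $c(x)\equiv a(x)b(x)\pmod{\phi(x)}$. On the other hand, Lemma \ref{lm2.3} applied to $\overline{f}=\overline{\alpha}$ and $\overline{g}=\overline{\beta}$ gives $t(\overline{\alpha}\otimes\overline{\beta})\equiv a(x)b(x)\equiv c(x)\pmod{\phi(x)}$; since $t$ takes values in $Q[x]/\langle\phi(x)\rangle$ with canonical representatives of degree $<n$ and $\deg c<n$, this forces $t(\overline{\alpha}\otimes\overline{\beta})=c(x)$, i.e. $\overline{\alpha}\otimes\overline{\beta}=t^{-1}(c(x))=\overline{c}$. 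Comparing the two expressions yields $\tau(\alpha\beta)=\overline{c}=\overline{\alpha}\otimes\overline{\beta}$, which is (\ref{2.16}), and since $\tau(\alpha)=\overline{\alpha}$, $\tau(\beta)=\overline{\beta}$ by notation the middle equality of the statement is automatic.

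An equivalent route, which avoids Lemma \ref{lm2.3} and uses only Lemma \ref{lm2.1} and Lemma \ref{lm2.2}: first check the special case $\tau(\theta\beta)=H\overline{\beta}$ directly from $\theta\beta=\sum_j b_j\theta^{j+1}$ together with the relation $\theta^n=\phi_0+\phi_1\theta+\cdots+\phi_{n-1}\theta^{n-1}$ coming from $\phi(\theta)=0$ — the same relation that defines $H$ — so that iterating gives $\tau(\theta^k\beta)=H^k\overline{\beta}$ for $0\le k\le n-1$ (and in fact for all $k\ge 0$ by (\ref{2.9})); then by $Q$-linearity of $\tau$ one gets $\tau(\alpha\beta)=\sum_k a_k H^k\overline{\beta}=\big(\sum_k a_k H^k\big)\overline{\beta}=H^{*}(\overline{\alpha})\,\overline{\beta}=\overline{\alpha}\otimes\overline{\beta}$, the penultimate equality being Lemma \ref{lm2.2}.

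There is no serious obstacle here; the only thing that genuinely needs care is the bookkeeping identifying $t\circ\tau$ with the inverse of the evaluation isomorphism $Q[x]/\langle\phi(x)\rangle\to E$, $x\mapsto\theta$ — that is, making sure that "expressing $\alpha\beta$ in the power basis" really coincides with "reduce $a(x)b(x)$ modulo $\phi(x)$". Once that identification is in place, the result follows immediately from the already-established properties of ideal matrices.
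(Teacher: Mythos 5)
Your proposal is correct, and in fact your second, ``equivalent route'' is precisely the paper's own proof: the authors compute $\theta\beta=\phi_0\beta_{n-1}+(\beta_0+\phi_1\beta_{n-1})\theta+\cdots+(\beta_{n-2}+\phi_{n-1}\beta_{n-1})\theta^{n-1}$ to get $\tau(\theta\beta)=H\overline{\beta}$, iterate to $\tau(\theta^k\beta)=H^k\overline{\beta}$ for $0\le k\le n-1$, and then conclude by linearity and Lemma \ref{lm2.2} that $\tau(\alpha\beta)=\sum_k\alpha_kH^k\overline{\beta}=H^{*}(\overline{\alpha})\overline{\beta}=\overline{\alpha}\otimes\overline{\beta}$. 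Your primary route is a genuinely different (and equally valid) path: it factors the claim through the evaluation isomorphism $Q[x]/\langle\phi(x)\rangle\to E$, $x\mapsto\theta$, and invokes Lemma \ref{lm2.3} to identify $\overline{\alpha}\otimes\overline{\beta}$ with the coefficient vector of $a(x)b(x)$ reduced mod $\phi(x)$; this is not circular, since Lemma \ref{lm2.3} is established independently beforehand. What your route buys is conceptual clarity --- it makes explicit that $\tau$ and $t$ are two coordinatizations of the same quotient-ring structure, so that Lemma \ref{lm2.5} is just the statement that $t\circ\tau$ is the inverse of the evaluation map; what the paper's direct computation buys is self-containedness, needing only the single relation $\theta^n=\phi_0+\phi_1\theta+\cdots+\phi_{n-1}\theta^{n-1}$ and the definition of $H$. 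You correctly flag the one point that needs care in your main route, namely that ``rewrite $\alpha\beta$ in the power basis'' and ``reduce $a(x)b(x)$ mod $\phi(x)$'' agree because representatives of degree less than $n$ are unique; no gap remains.
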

\begin{proof}
 Let $\beta=\beta_{0}+\beta_{1} \theta+\cdots+\beta_{n-1} \theta^{n-1}$, where $\beta_{i}\in Q$, it is easily seen that
$$
\theta \beta=\phi_{0} \beta_{n-1}+\left(\beta_{0}+\phi_{1} \beta_{n-1}\right) \theta+\cdots+\left(\beta_{n-2}+\phi_{n-1} \beta_{n-1}\right) \theta^{n-1},
$$
thus we have $\tau(\theta \beta)=H \tau(\beta)=H \overline{\beta}$, and
\begin{equation}\label{2.17}
\tau\left(\theta^{k}\beta\right)=H^{k} \tau ( \beta)=H^{k} \overline{\beta}, \quad 0 \leq k \leq n-1.
\end{equation}
Let $\alpha=\alpha_{0} + \alpha_{1} \theta+\cdots+\alpha_{n-1} \theta^{n-1}$, by lemma \ref{lm2.2}, we have
\begin{align*}
\tau(\alpha \beta)=\sum_{k=0}^{n-1} \alpha_{k}  \tau\left(\theta^{k} \beta\right)=\sum_{k=0}^{n-1} \alpha_{k} H^{k} \overline{\beta}=H^{*} ( \overline{\alpha})  \overline{\beta}=\overline{\alpha}\otimes \overline{\beta},
\end{align*}
the lemma follows immediately.
\end{proof}

Let $A=\left(a_{i j}\right)_{n \times n}$ be a square matrix, the trace of $A$ is defined by  $\operatorname{Tr}(A)=\sum_{i=1}^{n} a_{ ii}$ as usual. The main result of this subsection is the following theorem.

\begin{thm}\label{th2}
Let $E=Q(\theta)$ be an algebraic number field of degree $ n$, $\phi(x) \in \mathbb{Z}[x]$ be the minimal polynomial of $\theta$. Then the linear space $Q^{n}$ is a field under the $\phi$-conventional product, and all of ideal matrices $M_{Q}^{*}$ generated by rational vectors is also a field with the ordinary additive and product of matrices. Both of them are  isomorphic to $E$, namely
\begin{equation}\label{2.18}
E \cong Q^{n} \cong M_{Q}^{*} .
\end{equation}
Moreover, let $\alpha \in  E$,  $\operatorname{tr}(\alpha)$ and $N(\alpha)$ be the trace and norm of $\alpha$, then we have
\begin{equation}\label{2.19}
\operatorname{tr}(\alpha)=\operatorname{Tr}\left(H^{*}(\overline{\alpha})\right),\  \text { and } \ \ N(\alpha)=\operatorname{det}\left(H^{*}(\overline{\alpha})\right) .
\end{equation}
\end{thm}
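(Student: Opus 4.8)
The plan is to transport the field structure of the quotient ring $Q[x]/\langle\phi(x)\rangle$ along the bijections already constructed, and then to read off the trace and norm from the column structure of an ideal matrix. First I would establish that $(Q^{n},+,\otimes)$ is a field: restricting the correspondence $t$ of (\ref{2.10}) to $Q^{n}$ gives a bijection $t:Q^{n}\to Q[x]/\langle\phi(x)\rangle$, which is additive because $t$ is linear and which, by Lemma \ref{lm2.3}, carries $\overline{f}\otimes\overline{g}$ to $f(x)g(x)\bmod\phi(x)$; hence $t$ is a ring isomorphism. Since $\theta$ is an algebraic integer of degree $n$, its minimal polynomial $\phi(x)$ is irreducible over $Q$, so $Q[x]/\langle\phi(x)\rangle$ is a field, and therefore so is $Q^{n}$ under $\otimes$, with multiplicative identity $\overline{e}_{1}$ (note $t(\overline{e}_{1})=1$).

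Next I would identify $E$ and $M_{Q}^{*}$ with $Q^{n}$ as fields. The map $\tau$ of (\ref{1.11}) is already an isomorphism of additive groups $E\to Q^{n}$, and Lemma \ref{lm2.5} gives $\tau(\alpha\beta)=\overline{\alpha}\otimes\overline{\beta}$ together with $\tau(1)=\overline{e}_{1}$, so $\tau$ is a field isomorphism $E\cong Q^{n}$; equivalently $t\circ\tau$ is the classical isomorphism $E=Q(\theta)\cong Q[x]/\langle\phi(x)\rangle$, $a(\theta)\mapsto a(x)$. For $M_{Q}^{*}$, the map $H^{*}:Q^{n}\to M_{Q}^{*}$ is an additive bijection (as noted after Definition \ref{df2.2}), and Lemma \ref{lm2.4}(i)--(ii) shows that $M_{Q}^{*}$ is closed under matrix multiplication with $H^{*}(\overline{f})H^{*}(\overline{g})=H^{*}(\overline{f}\otimes\overline{g})$; thus $H^{*}$ is a ring isomorphism, $I_{n}=H^{*}(\overline{e}_{1})$ is its identity by Lemma \ref{lm2.1}, and by Lemma \ref{lm2.4}(v) the inverse of any nonzero ideal matrix over $Q$ again lies in $M_{Q}^{*}$. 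Hence $M_{Q}^{*}$ is a field and $E\cong Q^{n}\cong M_{Q}^{*}$, which is (\ref{2.18}).

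Finally, for (\ref{2.19}) I would note that the $(k+1)$-st column of $H^{*}(\overline{\alpha})$ is $H^{k}\overline{\alpha}=\tau(\theta^{k}\alpha)$ by (\ref{2.17}), so $H^{*}(\overline{\alpha})$ is exactly the matrix of the $Q$-linear map ``multiplication by $\alpha$'' on $E$ in the basis $\{1,\theta,\dots,\theta^{n-1}\}$; by the definition of $\operatorname{tr}_{E/Q}$ and $N_{E/Q}$ via this regular representation one gets $\operatorname{tr}(\alpha)=\operatorname{Tr}(H^{*}(\overline{\alpha}))$ and $N(\alpha)=\det(H^{*}(\overline{\alpha}))$ at once. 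Alternatively, writing $\alpha=a(\theta)$ with $\deg a<n$, the full list of conjugates of $\alpha$ is $a(\theta_{0}),\dots,a(\theta_{n-1})$, so $\operatorname{tr}(\alpha)=\sum_{i}a(\theta_{i})$ and $N(\alpha)=\prod_{i}a(\theta_{i})$; the norm identity is then immediate from Lemma \ref{lm2.4}(iv), and the trace identity from Lemma \ref{lm2.4}(iii) together with the invariance of $\operatorname{Tr}$ under conjugation.

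Since Lemmas \ref{lm2.1}--\ref{lm2.5} already carry the analytic weight, I do not expect a genuine obstacle here; the only point demanding a little care is the bookkeeping around the definition of trace and norm — once $H^{*}(\overline{\alpha})$ is recognized as the regular-representation matrix of multiplication by $\alpha$ (equivalently, once $a(\theta_{0}),\dots,a(\theta_{n-1})$ is seen to be the complete multiset of conjugates of $\alpha$), (\ref{2.19}) follows with no further work.
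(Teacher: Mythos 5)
Your proposal is correct and follows essentially the same route as the paper: the field isomorphisms $E \cong Q^{n} \cong M_{Q}^{*}$ are obtained from Lemmas \ref{lm2.3}--\ref{lm2.5} exactly as in the text, and your second argument for (\ref{2.19}) (conjugates together with Lemma \ref{lm2.4}(iii)--(iv)) is precisely the paper's own treatment of the norm. Your first argument for the trace --- recognizing $H^{*}(\overline{\alpha})$ outright as the regular-representation matrix of multiplication by $\alpha$ in the basis $\{1,\theta,\dots,\theta^{n-1}\}$ --- is a slightly tidier packaging of the paper's computation via $\operatorname{tr}(\theta^{k})=\operatorname{Tr}(H^{k})$ and linearity, but it rests on the same underlying fact.
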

\begin{proof}
$\tau: E \rightarrow Q^{n}$ given by $(\ref{1.11})$,  it is clearly that
$$
\tau(\alpha+\beta)=\tau(\alpha)+\tau(\beta),\  \ \text { and } \ \ \tau(\alpha \beta)=\tau(\alpha)\otimes\tau(\beta).
$$
Thus $Q^{n}$ is a field under the $\phi$-conventional product and $E \cong Q^{n}$.  By lemma \ref{lm2.4}, we have
$$
H^{*}(\overline{\alpha}+\overline{\beta})=H^{*}(\overline{\alpha})+H^{*}(\overline{\beta}) \ \ \text { and }
\ \ H^{*}\left(\overline{\alpha} \otimes \overline{\beta}\right)=H^{*}(\overline{\alpha}) H^{*}(\overline{\beta}),
$$
thus $M_{Q}^{*}$ is also a field and $E \cong Q^{n} \cong M_{Q}^{*}$.

 The main difficult is to prove (\ref{2.19}).
We observe that $\theta$ induces a linear  transformation of $E/Q$ by $\alpha \rightarrow \theta \alpha$, and the matrix of this linear
 transformation under basis $\left\{1, \theta, \theta^{2},\cdots,  \theta^{n-1}\right\}$ is just $H$, namely
$$
\theta\left(1, \theta, \theta^{2}, \cdots, \theta^{n-1}\right)=\left(1 ,\theta , \theta^{2}, \cdots, \theta^{n -1}\right) H .
$$
By the definition of trace, we have
$$
\operatorname{tr}(\theta)=\operatorname{Tr}(H), \ \text{ and}\  \operatorname{tr}(\theta^{k})=\operatorname{Tr}(H^{k}), \quad, 1 \leq k \leq n-1.
$$
Let $\alpha=\alpha_{0}+\alpha_{1} \theta+\cdots+\alpha_{n-1} \theta^{n-1} \in E$, it follows that
$$
\begin{aligned}
\operatorname{tr}(\alpha) =\sum_{k=0}^{n-1} \alpha_{i} \operatorname{tr}\left(\theta^{k}\right)=\sum_{i=0}^{n-1} \alpha_{i}
\operatorname{Tr}\left(H^{k}\right) =\operatorname{Tr}\left(\sum_{k=0}^{n-1} \alpha_{i } H^{k}\right)=\operatorname{Tr}\left(H^{*}(\overline{\alpha})\right) .
\end{aligned}
$$

To show that conclusion on the norm, let $\alpha^{(i)}(0 \leq i \leq n-1)$ be the $n$ conjugations of $\alpha$ in the
 smallest normal extension of $Q$ containing $E$, where $\alpha^{(0)}=\alpha=\alpha_{0}+\alpha_{1} \theta+\cdots+ \alpha_{n-1} \theta^{n-1}$.
  It is easily seen that
$$
\alpha^{(i)}=\sum_{k=0}^{n-1} \alpha_{k} \theta_{i}^{k},\  \text { where }\  \theta_{0}=\theta \ \text { and } \ 0 \leq i \leq n-1.
$$
By property (iii) of lemma \ref{lm2.4}, we have
$$
N(\alpha)=\prod_{i=0}^{n-1} \alpha^{(i)}=\prod_{i=0}^{n-1} \alpha\left(\theta_{i}\right)=\operatorname{det}\left(H^{*}(\overline{\alpha})\right).
$$
We complete the proof of Theorem  \ref{th2}.
\end{proof}

The cyclic lattice in $\mathbb{R}^{n}$ was introduced by Micciancio in \cite{M3}, (also see \cite{Z}), which  plays an important role in Ajtai's
 construction  of collision resistant  Hash function( see \cite{A}). As an application, we show that every  ideal in an algebraic
  number field corresponds to a cyclic lattice:

  \begin{cor}
Let $ A\subset R $ be an ideal  and $A\neq 0$, then $\tau(A) \subset Q^{n}$ is a cyclic lattice.
\end{cor}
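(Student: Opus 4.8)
The plan is to combine Lemma \ref{lm1.2}, which already says that $\tau(A)$ is a full rank rational lattice, with the single extra fact that $\tau(A)$ is invariant under the rotation matrix $H=H_{\phi}$; by the generalized definition of Micciancio (\cite{M3}, see also \cite{Z}), a rational lattice $L\subseteq Q^{n}$ with $HL\subseteq L$ is precisely what is meant here by a cyclic lattice, so these two facts together give the corollary.

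First I would recall that $E=Q(\theta)$ where $\theta$ is an \emph{algebraic integer}, hence $\theta\in R$. Since $A$ is an ideal of $R$ and $\theta\in R$, we get $\theta\alpha\in A$ for every $\alpha\in A$, that is, $\theta A\subseteq A$. (Note that only $\theta\in R$ is used here; the NC-property $R=\mathbb{Z}[\theta]$ is not needed.) Next I would transfer this to the vector side via $\tau$: by (\ref{2.17}) with $k=1$ (equivalently, Lemma \ref{lm2.5} applied with the first argument $\theta$, using $H^{*}(\overline{e}_{2})=H$ from Lemma \ref{lm2.1}), one has $\tau(\theta\alpha)=H\overline{\alpha}$ for all $\alpha\in E$. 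Therefore
$$
H\,\tau(A)=\tau(\theta A)\subseteq\tau(A).
$$
Combining this with Lemma \ref{lm1.2}, which gives $\tau(A)=L(B)$ for an invertible $B\in Q^{n\times n}$, we conclude that $\tau(A)$ is a full rank rational lattice closed under $H$, i.e. a cyclic lattice.

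There is no real obstacle in the argument; the only point that deserves care is making explicit which notion of ``cyclic lattice'' is in force — namely $H_{\phi}$-invariance, the natural generalization of the $x^{n}-1$ case — and checking that the translation $\theta\cdot\,\longleftrightarrow\,H\cdot$ provided by (\ref{2.17}) is exactly what converts ``$A$ is an ideal'' into ``$\tau(A)$ is $H$-invariant''. If desired, one could append the converse observation that any full rank rational lattice $L\subseteq Q^{n}$ with $HL\subseteq L$ is of the form $\tau(A)$ for a suitable (possibly fractional) ideal $A$, thereby identifying cyclic lattices in $Q^{n}$ with ideal lattices, but this is not required by the stated corollary.
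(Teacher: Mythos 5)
Your argument is essentially identical to the paper's: the paper also observes that $\theta\in R$ forces $\theta\alpha\in A$ for $\alpha\in A$ and then uses $\tau(\theta\alpha)=H\overline{\alpha}$ (via Lemma \ref{lm2.5}, i.e. the same identity as (\ref{2.17})) to conclude $H\,\tau(A)\subseteq\tau(A)$. Your version is slightly more careful in explicitly invoking Lemma \ref{lm1.2} for the lattice structure and in pinning down the definition of cyclic lattice, but the substance is the same.
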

\begin{proof}
Suppose that $\alpha \in A$. Since $\theta \in R$, then $\theta \alpha \in A$. By  (\ref{2.16}), we have
$$
\tau(\theta \alpha)=H \overline{\alpha} \in \tau(A) .
$$
Thus $\tau(A)$ is a cyclic lattice.
\end{proof}

\section{ High Dimensional RSA}

In this section, we give an attainable algorithm for the high dimensional RSA by making use of lattice theory, this algorithm is significant both from the theoretical and practical point of view. Suppose that the algebraic numbers field $E$ satisfying the NC-property, then $R=\mathbb{Z}[\theta]$ is the ring of algebraic integers of $E$, the restriction of correspondence $\tau$ gives a ring isomorphism from $R$
to $\mathbb{Z}^{n}$. Let $\mathbb{Z}(x)$ be the ring of integer coefficients polynomials and $(\phi(x))$ be the principal ideal generated by $\phi(x)$ in $\mathbb{Z}(x)$, it is easy to see that $R \cong \mathbb{Z}[x] / (\phi(x))$. Let $M_{\mathbb{Z}}^{*}$ be the set of ideal matrices generated by an integral vector, i.e.
\begin{equation}\label{3.1}
M_{\mathbb{Z}}^{*}=\left\{H^{*}(\overline{f}) \mid \overline{f} \in \mathbb{Z}^{n}\right\} .
\end{equation}
Then the following four rings are isomorphic from each other
\begin{equation}\label{3.2}
\mathbb{Z}[x]/(\phi(x)) \cong R\cong \mathbb{Z}^{n} \cong M_{\mathbb{Z}}^{*}.
\end{equation}

For any polynomial $ \alpha(x)=\alpha_{0}+\alpha_{1}x+\cdots+\alpha_{n-1}x^{n-1} \in \mathbb{Z}[x]/(\phi(x))$, the corresponding algebraic integer is
$ \alpha=\alpha_{0}+\alpha_{1}\theta+\cdots+\alpha_{n-1}\theta^{n-1}\in R$, we write this  isomorphism by
\begin{equation}\label{3.3}
\alpha(x) \rightarrow \alpha \stackrel{\tau}{\longrightarrow} \overline{\alpha} \stackrel{H^{*}}{\longrightarrow} H^{*}(\alpha).
\end{equation}

A $ \phi$-ideal lattice means an integer lattice of which corresponds an ideal of $\mathbb{Z}(x) /(\phi(x))$, it was first introduced by Lyubashevsky and  Micciancio in $ $(see also \cite{Z}), which also plays a key role in  Gentry's construction for the full homomorphic cryptosystem (see \cite{G-a}), Fluckiger and Suarez in \cite{F} extended this definition to total real number field. .

\begin{lem}\label{lm3.1}  Let $E$  be  an  algebraic numbers field with  NC- property, $R=\mathbb{Z}[\theta]$ be the ring of algebraic integers of $E$. Then there is an one to one correspondence  between ideals of $R$ and the $\phi$-ideal  lattices. Moreover, if $\alpha \in R$, then we have
\begin{equation}\label{3.4}
\tau(\alpha R)=L\left(H^{*}(\overline{\alpha})\right).
\end{equation}
In general, suppose that $A \subset R$ is an ideal and $A \neq 0$, then there exists two elements $\alpha$ and $\beta$ in $A$ such that
\begin{equation}\label{3.5}
\tau(A)=L\left(H^{*}(\overline{\alpha})\right)+L\left(H^{*}(\overline{\beta})\right).
\end{equation}
 \end{lem}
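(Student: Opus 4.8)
The plan is to break the statement into three parts and handle them in sequence, leaning on the ring isomorphism $\tau: R \to \mathbb{Z}^n$ (valid here because $E$ has the NC-property, so $R = \mathbb{Z}[\theta]$) together with Lemmas \ref{lm2.4} and \ref{lm2.5}. First I would prove \eqref{3.4}. Given $\alpha \in R$, every element of $\alpha R$ has the form $\alpha\gamma$ with $\gamma \in R$; writing $\gamma = \gamma_0 + \gamma_1\theta + \cdots + \gamma_{n-1}\theta^{n-1}$ with $\gamma_i \in \mathbb{Z}$, Lemma \ref{lm2.5} gives $\tau(\alpha\gamma) = \overline{\alpha}\otimes\overline{\gamma} = H^{*}(\overline{\alpha})\,\overline{\gamma}$. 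As $\gamma$ ranges over $R$, the vector $\overline{\gamma}$ ranges over all of $\mathbb{Z}^n$, so $\tau(\alpha R) = \{ H^{*}(\overline{\alpha})\, x \mid x \in \mathbb{Z}^n \} = L(H^{*}(\overline{\alpha}))$. One should note this is a genuine (full-rank) lattice exactly when $\alpha \neq 0$: by property (v) of Lemma \ref{lm2.4}, $H^{*}(\overline{\alpha})$ is invertible over $Q$ whenever $\overline{\alpha}\neq 0$, which matches the convention that ideals are nonzero.

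Next, the one-to-one correspondence between ideals of $R$ and $\phi$-ideal lattices. Under $\tau$, an ideal $A \subset R$ is an additive subgroup closed under multiplication by $\theta$ (since $\theta \in R$); by Lemma \ref{lm2.5}, multiplication by $\theta$ transports to multiplication by $H = H^{*}(\overline{e}_2)$, so $\tau(A)$ is an integer lattice stable under $H$, i.e. a $\phi$-ideal lattice in the sense defined just above the statement. Conversely, a $\phi$-ideal lattice $L \subset \mathbb{Z}^n$ pulls back under $\tau^{-1}$ to an additive subgroup of $R$ closed under multiplication by $\theta$, hence (since $R = \mathbb{Z}[\theta]$ and powers of $\theta$ span $R$ additively) closed under multiplication by all of $R$ — that is, an ideal. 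Because $\tau$ is a bijection these two assignments are mutually inverse, giving the claimed correspondence. I would also observe that $\tau(A)$ has full rank: $A$ contains a nonzero rational integer $m$ (e.g. a generator of $A \cap \mathbb{Z}$, or $N(\alpha)$ for any $0\neq\alpha\in A$ using \eqref{1.12}), hence contains $mR$, so $\tau(A) \supset m\mathbb{Z}^n$.

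Finally, \eqref{3.5}: every ideal of $R$ is generated by two elements. I would invoke the standard fact from algebraic number theory that in a Dedekind domain every ideal is generated by two elements — indeed one may take any $0 \neq \alpha \in A$ and then choose $\beta \in A$ appropriately (for instance so that $\alpha R + \beta R = A$, which is possible since $A/\alpha R$ is a finitely generated module over the Dedekind domain $R$ killed by $\alpha$, hence cyclic-by-CRT). Once $A = \alpha R + \beta R$, applying $\tau$ and using \eqref{3.4} for each principal piece together with the additivity $\tau(A) = \tau(\alpha R) + \tau(\beta R)$ yields $\tau(A) = L(H^{*}(\overline{\alpha})) + L(H^{*}(\overline{\beta}))$.

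The main obstacle is the last part: justifying cleanly that $A = \alpha R + \beta R$ for suitable $\alpha,\beta$. The cleanest route is to cite the two-generator property of ideals in Dedekind domains (the ring of integers of a number field is Dedekind) rather than reprove it; if the authors prefer a self-contained argument, one fixes $0\neq\alpha\in A$, notes $\alpha R \subseteq A$ with finite index, and picks $\beta$ reducing to a generator of the cyclic group $A/\alpha R$ after a suitable approximation at the finitely many primes dividing $\alpha$. Everything else is a direct translation through $\tau$ and $H^{*}$ using results already established, so no further difficulty is expected there.
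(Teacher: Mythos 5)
Your proposal is correct and follows essentially the same route as the paper: equation \eqref{3.4} via Lemma \ref{lm2.5} and the surjectivity of $\gamma \mapsto \overline{\gamma}$ onto $\mathbb{Z}^{n}$ under the NC-property, and \eqref{3.5} via the two-generator property of ideals in the ring of integers (the paper cites Corollary 5, p.~11 of \cite{N}) plus additivity of $\tau$. The only difference is cosmetic: for the bijection between ideals and $\phi$-ideal lattices you argue directly through $H$-stability, whereas the paper cites the corresponding result of \cite{Z} together with the isomorphisms \eqref{3.2}; your version is a bit more self-contained but proves the same thing.
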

 \begin{proof}
 Since there is an one to one correspondence between the $\phi$-ideal lattices and the ideals of $\mathbb{Z}[x] /(\phi(x))$ (See Corollary  of \cite{Z}), by (\ref{3.2}), the first assertion follows immediately. Let $\alpha \in R$, then $\alpha R=\{\alpha x \mid x \in R\}$, by lemma \ref{lm2.5} we have
$$
\tau(\alpha x)=H^{*}(\alpha) \overline{x},\ \  \text { where } \overline{x}= \begin{pmatrix}
     x_0 \\
    x_1  \\
     \vdots \\
     x_{n-1}
\end{pmatrix}\in \mathbb{Z}^{n} .
$$
It follows what
$$
\tau(\alpha R)=\left\{H^{*}(\alpha) \overline{x} \mid \overline{x}\in \mathbb{Z}^{n}\right\}=L\left(H^{*}(\overline{\alpha})\right).
$$
To prove (\ref{3.5}), it is known  that any an ideal of $R$ is generated by at most two elements (see corollary 5 of page 11 of \cite{N} ), namely, $A=\alpha R+\beta R$, then we have
$$
\tau(A)=\tau(\alpha R)+\tau(\beta R)=L\left(H^{*}(\overline{\alpha})\right)+L\left(H^{*}(\overline{\beta})\right).
$$
\end{proof}

To introduce an attainable algorithm for high dimensional RSA, we require some basic results from lattice theory. Let $L=L(B)\subset \mathbb{R}^{n}$ be a full-rank lattice, the determinant of $L$ is defined by
\begin{equation}\label{3.6}
d(L)=|\operatorname{det}(B)|.
\end{equation}

Suppose that the generated matrix $B=\left[\overline{b}_{1}, \overline{b}_{2}, \cdots, \overline{b}_{n}\right], \overline{b}_{i} \in \mathbb{R}^{n}$ is the column vectors of $B$. Since $\left\{\overline{b}_{1}, \overline{b}_{2}, \cdots, \overline{{b}}_{n}\right\}$ is a basis for $\mathbb{R}^{n}$, let $B^{*}=\left\{\overline{b}_{1}^{*}, \overline{b}_{2}^{*}, \cdots, \overline{b}_{n}^{*}\right\}$ be the corresponding orthogonal basis, where $\overline{b}_{1}^{*}=\overline{b}_{1}$, and  $\overline{b}_{i}^{*} $ is obtained by Gram-Schmidt orthogonal process in order.

A basis $B$ is called in Hermited Normal Form (HNF) if it is upper triangular, all elements on the diagonal are strictly positive, and any other elements $b_{i j}$ satisfies $0 \leq b_{i j}<b_{i i}$. It is easy to see that every integer lattice $L=L(B)$ has a unique basis in Hermited Normal Form,  denoted by $\operatorname{HNF}(L)$(see Theorem 2.4.3 of \cite{C2}). Moreover, given any basis $B$ for lattice $L,$ $ \operatorname{HNF}(L)$ can be efficiently computed from $B$ (see \cite{M4} and \cite{C2}).

\begin{prop}\label{pp1}
Let $L=L(B)$  and $B=(b_{ij})_{n\times n}$ be the basis in HNF. Then the corresponding orthogonal basis $B^{*}$ is a diagonal matrix, namely
\begin{equation}\label{3.7}
B^{*}=\operatorname{diag}\left\{b_{11}, b_{22}, \cdots, b_{nn}\right\}.
\end{equation}
Moreover, we have
\begin{equation}\label{3.8}
d(L)=\prod_{i=1}^{n} b_{i i}.
\end{equation}
\end{prop}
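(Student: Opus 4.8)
The plan is to prove both claims at once by running the Gram--Schmidt orthogonalization on the columns $\overline b_1,\dots,\overline b_n$ of $B$ and exploiting the upper-triangular shape of $B$. First I would recall the Gram--Schmidt recursion $\overline b_i^{*}=\overline b_i-\sum_{j<i}\mu_{ij}\overline b_j^{*}$ with $\mu_{ij}=\langle\overline b_i,\overline b_j^{*}\rangle/\langle\overline b_j^{*},\overline b_j^{*}\rangle$, and observe that since $B$ is in HNF it is upper triangular, so $\overline b_i=\sum_{k\le i}b_{ki}\overline e_k$; in particular $\overline b_1=b_{11}\overline e_1$, hence $\overline b_1^{*}=b_{11}\overline e_1$.

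The key step is an induction on $i$ showing that $\overline b_i^{*}=b_{ii}\overline e_i$ for every $i$. Assuming $\overline b_j^{*}=b_{jj}\overline e_j$ for all $j<i$, the vectors $\overline b_1^{*},\dots,\overline b_{i-1}^{*}$ span $\mathrm{span}\{\overline e_1,\dots,\overline e_{i-1}\}$. By definition $\overline b_i^{*}$ is the component of $\overline b_i$ orthogonal to that span; since $\overline b_i=\sum_{k\le i}b_{ki}\overline e_k$, subtracting its projection onto $\mathrm{span}\{\overline e_1,\dots,\overline e_{i-1}\}$ simply removes the terms with $k<i$, leaving $\overline b_i^{*}=b_{ii}\overline e_i$. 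This gives \eqref{3.7}. For \eqref{3.8}, I would use the standard identity $d(L)=\prod_{i=1}^n |\overline b_i^{*}|$ (which follows because the change of basis from $\{\overline b_i\}$ to $\{\overline b_i^{*}\}$ is unary triangular with determinant $1$, so $|\det B|=\prod_i|\overline b_i^{*}|$), and then substitute $|\overline b_i^{*}|=b_{ii}$, which is positive by the HNF convention; hence $d(L)=\prod_{i=1}^n b_{ii}$.

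The only mild subtlety — and the step I would be most careful about — is the bookkeeping in the projection: one must check that the projection of $\overline b_i$ onto $\mathrm{span}\{\overline e_1,\dots,\overline e_{i-1}\}$ is exactly $\sum_{k<i}b_{ki}\overline e_k$, i.e. that the Gram--Schmidt coefficients $\mu_{ij}$ conspire to reproduce those coordinates. This is immediate once one notes that $\{\overline b_1^{*},\dots,\overline b_{i-1}^{*}\}=\{b_{11}\overline e_1,\dots,b_{i-1,i-1}\overline e_{i-1}\}$ is an orthogonal basis of the coordinate subspace $\mathrm{span}\{\overline e_1,\dots,\overline e_{i-1}\}$, so the orthogonal projection onto that subspace is just the truncation keeping coordinates $1,\dots,i-1$; no genuine obstacle remains, only the inductive formulation needs to be stated cleanly.
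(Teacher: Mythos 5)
Your proposal is correct. Note that the paper does not actually prove Proposition \ref{pp1} at all --- its ``proof'' is the single line ``See \cite{M4}'', deferring entirely to Micciancio --- so there is no in-paper argument to compare against; your write-up supplies the missing standard proof. The induction showing $\overline b_i^{*}=b_{ii}\overline e_i$ is sound: upper triangularity gives $\overline b_i=\sum_{k\le i}b_{ki}\overline e_k$, the inductive hypothesis identifies $\mathrm{span}\{\overline b_1^{*},\dots,\overline b_{i-1}^{*}\}$ with the coordinate subspace $\mathrm{span}\{\overline e_1,\dots,\overline e_{i-1}\}$ (using $b_{jj}>0$ so these vectors are nonzero), and projection onto a coordinate subspace is truncation, which proves (\ref{3.7}). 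For (\ref{3.8}) your route via $d(L)=\prod_i|\overline b_i^{*}|$ works, though it is slightly roundabout: since $B$ is upper triangular, $d(L)=|\det B|=\prod_i b_{ii}$ follows in one line without invoking Gram--Schmidt at all. Two cosmetic points: ``unary triangular'' should read ``unit upper triangular'' (or ``unipotent''), and you should state explicitly that the positivity of the diagonal entries in HNF is what lets you drop the absolute values in $|\overline b_i^{*}|=b_{ii}$ and in $|\det B|=\prod_i b_{ii}$.
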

\begin{proof}
See \cite{M4}.
\end{proof}

\begin{def}\label{df3.1}
Let $L=L(B)\subset \mathbb{R}^{n}$ be a full-rank lattice, and $B^{*}=\left[\overline{b}_{1}^{*}, \overline{b}_{2}^{*}, \cdots, \overline{b}_{n}^{*}\right]$ be the corresponding  orthogonal basis, the orthogonal  parallelepiped $F\left(B^{*}\right)$ is defined by
\begin{equation}\label{3.9}
F( B^{*})=\left\{\sum_{i=1}^{n} x_{ i}\overline{b}_{i}^{*} \mid 0\leq x_{i}<1 \ \text{and}\ x_{i}\in \mathbb{R}\right\}.
\end{equation}
\end{def}

\begin{prop}\label{pp2}
Let $L=L(B)\subset \mathbb{Z}^{n}$ be an integer lattice,  $B= \operatorname{HNF}(L)$ be  the basis in $\operatorname{HNF}$ and $B^{*}= \operatorname{diag}\left\{b_{11}, b_{22}, \cdots, b_{nn}\right\}$ be the corresponding orthogonal  basis, $F\left(B^{*}\right)$ is the
orthogonal  parallelepiped given by (\ref{3.9}), then $S$ is a set of coset representatives for the quotient group $\mathbb{Z}^{n} / L$, where
$$
S=F\left(B^{*}\right) \cap \mathbb{Z}^{n}=\left\{x^{\prime}=\left(x_{1}, x_{2}, \cdots, x_{n}\right) \mid \forall x_{i} \in \mathbb{Z}\ \
\text { and } \ \ 0 \leq x_{1}<b_{ ii}\right\}.
$$
\end{prop}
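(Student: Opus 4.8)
The plan is to reduce the statement, via Proposition \ref{pp1}, to a concrete fact about integer points in an axis-parallel box, and then to exploit the upper-triangular shape of $B=\operatorname{HNF}(L)$ to show that every coset of $L$ in $\mathbb{Z}^{n}$ meets $S$ in exactly one point.

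First I would pin down the shape of $F(B^{*})$. By Proposition \ref{pp1} the Gram--Schmidt basis of $B=(b_{ij})$ is $B^{*}=\operatorname{diag}\{b_{11},\dots,b_{nn}\}$, i.e. $\overline{b}_{i}^{*}=b_{ii}\overline{e}_{i}$. Substituting this into the definition of $F(B^{*})$ in (\ref{3.9}) and writing $y_{i}=x_{i}b_{ii}$ shows $F(B^{*})=\{\,\overline{y}\in\mathbb{R}^{n}\mid 0\le y_{i}<b_{ii},\ 1\le i\le n\,\}$, so $S=F(B^{*})\cap\mathbb{Z}^{n}$ is precisely the set of integer vectors $(x_{1},\dots,x_{n})$ with $0\le x_{i}<b_{ii}$ described in the statement; in particular $|S|=\prod_{i=1}^{n}b_{ii}$, which by (\ref{3.8}) equals $d(L)$.

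The heart of the proof is to show that each coset of $L$ in $\mathbb{Z}^{n}$ contains exactly one element of $S$. For uniqueness, suppose $\overline{x},\overline{y}\in S$ with $\overline{x}-\overline{y}\in L=L(B)$, say $\overline{x}-\overline{y}=B\overline{z}$ with $\overline{z}\in\mathbb{Z}^{n}$. Reading coordinates from the bottom and using that $B$ is upper triangular, the $n$-th coordinate gives $x_{n}-y_{n}=b_{nn}z_{n}$; since $0\le x_{n},y_{n}<b_{nn}$ we have $|x_{n}-y_{n}|<b_{nn}$, so $z_{n}=0$ and $x_{n}=y_{n}$. Feeding $z_{n}=0$ into the $(n-1)$-st coordinate gives $x_{n-1}-y_{n-1}=b_{n-1,n-1}z_{n-1}$, hence $z_{n-1}=0$, and descending inductively yields $\overline{z}=0$, i.e. $\overline{x}=\overline{y}$. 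For existence, given $\overline{v}\in\mathbb{Z}^{n}$ I would clear coordinates from the last to the first: subtract the integer multiple $z_{n}\overline{b}_{n}$ of the $n$-th column of $B$ that brings the $n$-th coordinate into $[0,b_{nn})$, then subtract a suitable integer multiple of $\overline{b}_{n-1}$ to put the $(n-1)$-st coordinate into $[0,b_{n-1,n-1})$, and so on down to $\overline{b}_{1}$. Because the $j$-th column $\overline{b}_{j}$ of an upper-triangular $B$ has zero entries in rows $j+1,\dots,n$, each subtraction leaves the already-adjusted lower coordinates untouched; after $n$ steps the resulting vector lies in $S$ and is congruent to $\overline{v}$ modulo $L$. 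Uniqueness together with existence shows $S$ is a complete set of coset representatives for $\mathbb{Z}^{n}/L$.

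Alternatively, one can shorten this by combining only the uniqueness half with a count: a full-rank sublattice $L\subseteq\mathbb{Z}^{n}$ has index $[\mathbb{Z}^{n}:L]=d(L)/d(\mathbb{Z}^{n})=d(L)$, which by the first paragraph equals $|S|$, so the injection $S\hookrightarrow\mathbb{Z}^{n}/L$ is automatically a bijection. I do not expect a genuine obstacle here; the only points requiring care are the bookkeeping in the triangular back-substitution and, if the counting route is taken, the standard (but not previously proved in the text) fact that the index of a full-rank sublattice equals the ratio of determinants.
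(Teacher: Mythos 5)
Your proof is correct. Note that the paper itself does not prove this proposition at all---its ``proof'' is the single line ``See section 4.1 of \cite{M4}''---so there is no internal argument to compare against; what you have written is the standard self-contained justification that the citation outsources. Your two halves are both sound: the identification of $F(B^{*})$ with the box $\prod_{i}[0,b_{ii})$ follows correctly from Proposition \ref{pp1} since $\overline{b}_{i}^{*}=b_{ii}\overline{e}_{i}$; uniqueness by reading $B\overline{z}$ from the bottom row up uses exactly the upper-triangular structure (row $n$ of $B$ is $(0,\dots,0,b_{nn})$, so $x_{n}-y_{n}=b_{nn}z_{n}$ forces $z_{n}=0$, and so on); and existence by reducing coordinates from index $n$ down to $1$ works because column $j$ of $B$ vanishes in rows $j+1,\dots,n$, so each reduction step preserves the coordinates already placed in range. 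Your alternative counting route (injectivity plus $|S|=\prod_{i}b_{ii}=d(L)=[\mathbb{Z}^{n}:L]$) is also valid, with the caveat you yourself flag that the index--determinant identity is standard but not established in this paper. The only stylistic remark is that the strict positivity of the diagonal entries of an HNF basis is what guarantees each half-open interval $[0,b_{ii})$ is nonempty and the reduction step is well defined; it is worth saying this explicitly since the paper's Definition of HNF supplies it.
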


\begin{proof}
See section  4.1 of \cite{M4}.
\end{proof}

Now, we return to the algebraic numbers field $E=Q[\theta]$ (with NC-property). Let $\alpha,\beta \in R$ be two algebraic integers, by Lemma \ref{lm3.1}, the principal ideal $\alpha R$ corresponds to the minimal  $ \phi$-ideal lattice $L ( H^{*}(\overline{\alpha}))$.  Thus $A=(\alpha R)(\beta R)=\alpha \beta R$ corresponds  to $L\left( H^{*}( \overline{\alpha}\otimes \beta)\right)$.

\begin{defn}\label{df3.2}

For given $\alpha, \beta \in R$, $ \tau(\alpha)=\overline{\alpha}$ and $\tau(\beta)=\overline{\beta}$, we denote the lattice $L_{\alpha, \beta}$ by
\begin{equation}\label{3.11}
L_{\alpha, \beta}=L\left(H^{*}(\overline{\alpha} \otimes \overline{\beta})\right).
\end{equation}
The $\operatorname{HNF}$ basis of $L_{\alpha, \beta}$ is denoted by $B_{ \alpha, \beta}$ and the corresponding orthogonal basis is denoted by
\begin{equation}\label{3.12}
B_{\alpha, \beta}^{*}=\operatorname{diag}\left\{b_{1}, b_{2}, \cdots, b_{n}\right\},
\end{equation}
where  $b_{i} \in \mathbb{Z}$  and $b_{i} \geq 1$. The parallelepiped is given by
\begin{equation}\label{3.13}
S_{\alpha, \beta}=\left\{\left(x_{1}, x_{2}, \cdots, x_{n}\right) \in \mathbb{Z}^{n} \mid x_{i} \in \mathbb{Z}\ \  \text { and } \ \ 0 \leq x_{i}<b_{i}\right\}.
\end{equation}
\end{defn}

\begin{lem}\label{lm3.2}
 Let $\alpha \in R, \beta \in R$ and $A=\alpha \beta R$. Then $S_{\alpha, \beta} $ given by (\ref{3.13}) is corresponding to a set of coset representatives of the factor ring $R/A$ in the algebraic numbers field $E$  with  NC-property.
\end{lem}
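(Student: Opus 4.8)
The plan is to reduce the statement about $R/A$ to the already-established dictionary between $R$, $\mathbb{Z}^n$, $M_{\mathbb Z}^*$ and $\phi$-ideal lattices, and then apply Proposition \ref{pp2} to the lattice $L_{\alpha,\beta}$. First I would use the NC-property: since $R=\mathbb{Z}[\theta]$, the correspondence $\tau$ of (\ref{1.11}) restricts to a ring isomorphism $\tau:R\to\mathbb{Z}^n$ (via $\mathbb{Z}[x]/(\phi(x))$, as in (\ref{3.2})), sending the principal ideal $A=\alpha\beta R$ to $\tau(A)=L(H^*(\overline{\alpha\beta}))=L(H^*(\overline\alpha\otimes\overline\beta))=L_{\alpha,\beta}$; here I use Lemma \ref{lm3.1} together with Lemma \ref{lm2.5} to identify $\tau(\alpha\beta R)$ with the $\phi$-ideal lattice generated by $H^*(\overline\alpha\otimes\overline\beta)$. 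Thus the quotient ring $R/A$ is, as an additive group, isomorphic to $\mathbb{Z}^n/L_{\alpha,\beta}$ via the map induced by $\tau$.

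Next I would observe that $L_{\alpha,\beta}\subset\mathbb{Z}^n$ is a full-rank integer lattice (full-rank because $H^*(\overline\alpha\otimes\overline\beta)$ is invertible by property (v) of Lemma \ref{lm2.4}, as $\alpha,\beta\neq 0$ forces $\overline\alpha\otimes\overline\beta\neq 0$), so it has a unique $\operatorname{HNF}$ basis $B_{\alpha,\beta}$ with diagonal orthogonalization $B_{\alpha,\beta}^*=\operatorname{diag}\{b_1,\dots,b_n\}$, exactly the setup of Definition \ref{df3.2}. Applying Proposition \ref{pp2} to $L=L_{\alpha,\beta}$, the set
$$
S_{\alpha,\beta}=F(B_{\alpha,\beta}^*)\cap\mathbb{Z}^n=\{(x_1,\dots,x_n)\in\mathbb{Z}^n\mid 0\le x_i<b_i\}
$$
is a complete set of coset representatives for $\mathbb{Z}^n/L_{\alpha,\beta}$. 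Pulling this back through the isomorphism $\tau^{-1}$ (equivalently, regarding a vector $(x_1,\dots,x_n)$ as $\sum_{i=0}^{n-1}x_{i+1}\theta^i\in R$) gives a complete set of coset representatives for $R/A$, which is the assertion.

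The main point requiring care — and the one I would write out most carefully — is the identification $\tau(A)=L_{\alpha,\beta}$ at the level of lattices, i.e. that $\tau$ carries the ideal-theoretic product $\alpha\beta R$ to the lattice $L(H^*(\overline\alpha\otimes\overline\beta))$ and not merely to something commensurable with it; this is where the NC-property is essential (so that $\tau(R)$ is all of $\mathbb{Z}^n$, not a proper sublattice) and where Lemma \ref{lm2.5} and Lemma \ref{lm3.1} do the work. Everything else is bookkeeping: $\tau$ being an additive isomorphism immediately transports coset representatives of $\mathbb{Z}^n/L_{\alpha,\beta}$ to coset representatives of $R/A$, and the explicit box description of $S_{\alpha,\beta}$ is read off directly from Proposition \ref{pp1} and Proposition \ref{pp2}. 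One should also note in passing that $|S_{\alpha,\beta}|=\prod_i b_i=d(L_{\alpha,\beta})=|\det H^*(\overline\alpha\otimes\overline\beta)|=|N(\alpha\beta)|=N(A)$ by (\ref{3.8}), Theorem \ref{th2} and (\ref{1.12}), which is a useful consistency check that $S_{\alpha,\beta}$ has the right cardinality.
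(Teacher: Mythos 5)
Your proposal is correct and follows essentially the same route as the paper: both identify $\tau(A)$ with $L_{\alpha,\beta}$ via the NC-property and Lemmas \ref{lm2.5} and \ref{lm3.1}, and then invoke Proposition \ref{pp2} for the HNF basis of $L_{\alpha,\beta}$; the cardinality computation $|S_{\alpha,\beta}|=d(L_{\alpha,\beta})=N(A)$ that you offer as a consistency check is in fact the main computation the paper writes out explicitly.
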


\begin{proof}
 By Proposition \ref{pp1}, it is  easy  to see that
$$
\left|S_{\alpha, \beta}\right|=\prod_{i=1}^{n} b_{i} =\left|\operatorname{det}\left(H^{*}(\overline{\alpha} \otimes \overline{\beta})\right)\right| =\left| \operatorname{det}\left(H^{*}(\overline{\alpha})\right) \right|  \cdot \left| \operatorname{det}\left(H^{*}(\overline{\beta})\right) \right|=d\left(L_{\alpha, \beta}\right).
$$
By theorem \ref{th2} and $(\ref{1.12})$, we have
$$
\begin{aligned}
N(A) =|N(\alpha  \cdot  \beta)|=| N( \alpha)| \cdot | N(\beta) |= \left| \operatorname{det}\left(H^{*}(\overline{\alpha})\right)\right|\cdot\left| \operatorname{det}\left(H^{*}(\overline{\beta})\right)\right|
=d\left(L_{\alpha, \beta}\right).
\end{aligned}
$$
It follows that $N(A)=\left|S_{\alpha, \beta}\right|$. Since $E$ satisfies NC-property, if $\alpha \in R$, then $\overline{\alpha}=\tau(\alpha) \in \mathbb{Z}^{n}$, hence $\alpha \equiv \beta(\text{mod}\ \ A)$ in $R$, if and only  if
$$
\overline{\alpha} \equiv \overline{\beta}\left(\operatorname{mod}\ \  L_{\alpha, \beta}\right) .
$$
The lemma follows from Proposition \ref{pp2} immediately.
 \end{proof}

The main result of this subsection is the following theorem.

\begin{thm} \label{th3}  Let $E$ be an algebraic numbers field of degree  $n$ with NC-property, $\alpha \in R, \beta \in R$ be two distinct prime elements, $A=\alpha \beta R$, and $L_{\alpha, \beta}$ be the lattice given  by (\ref{3.11}). Then for any $\overline{a} \in \mathbb{Z}^{n}, k \in \mathbb{Z}, k \geq 0$, we have
\begin{equation}\label{3.14}
\overline{a}^{\otimes(k \varphi(\alpha, \beta)+1)}\equiv \overline{a} \left(\text { mod } \ \ L_{\alpha, \beta}\right),
\end{equation}
where
\begin{equation}\label{3.15}
\varphi(\alpha, \beta)=\left( \left|\operatorname{det}\left(H^{*}(\overline{\alpha})\right)\right|-1\right) \left( \left|\operatorname{det}\left(H^{*}(\overline{\beta})\right)\right|-1\right) .
\end{equation}
\end{thm}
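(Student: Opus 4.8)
The plan is to transport the statement back to the ring $R$ through the isomorphism $\tau$, apply Theorem \ref{th1} there, and transport the resulting congruence forward again; the whole argument thus reduces to bookkeeping with the isomorphisms of (\ref{3.2}). Because $E$ has the NC-property, $\tau\colon R\to\mathbb{Z}^{n}$ is a bijection and a ring isomorphism once $\mathbb{Z}^{n}$ is equipped with the $\phi$-conventional product, so by Lemma \ref{lm2.5} and an easy induction (using associativity of $\otimes$ from Theorem \ref{th2}) one has $\tau(a^{m})=\overline{a}^{\otimes m}$ for all $a\in R$ and all integers $m\ge 1$. Given $\overline{a}\in\mathbb{Z}^{n}$, let $a\in R$ be the unique element with $\tau(a)=\overline{a}$; note that $m=k\varphi(\alpha,\beta)+1\ge 1$ for every $k\ge 0$, so $\overline{a}^{\otimes m}$ is well defined and, for $k=0$, (\ref{3.14}) is trivial.

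Next I would set up the dictionary for the ideal, its lattice, and the exponent. Since $\alpha$ and $\beta$ are prime elements, $\alpha R$ and $\beta R$ are prime ideals, and the hypothesis that they are distinct prime elements is used in the form $\alpha R\ne\beta R$, so that $A=\alpha\beta R=(\alpha R)(\beta R)$ is a product of two distinct (hence coprime) prime ideals. By Lemma \ref{lm3.1}, $\tau(A)=L(H^{*}(\overline{\alpha\beta}))$, and by Lemma \ref{lm2.5} $\overline{\alpha\beta}=\overline{\alpha}\otimes\overline{\beta}$, so $\tau(A)=L_{\alpha,\beta}$; moreover, exactly as in the proof of Lemma \ref{lm3.2}, for $x,y\in R$ one has $x\equiv y\pmod{A}$ in $R$ if and only if $\overline{x}\equiv\overline{y}\pmod{L_{\alpha,\beta}}$ in $\mathbb{Z}^{n}$. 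For the exponent: $\alpha R$ being prime, (\ref{1.1}) gives $\varphi(\alpha R)=N(\alpha R)-1$; combining (\ref{1.12}) with the norm identity $N(\alpha)=\det(H^{*}(\overline{\alpha}))$ of Theorem \ref{th2} gives $N(\alpha R)=|N(\alpha)|=|\det(H^{*}(\overline{\alpha}))|$, hence $\varphi(\alpha R)=|\det(H^{*}(\overline{\alpha}))|-1$, and similarly for $\beta$. Since $\alpha R$ and $\beta R$ are coprime, (\ref{1.1}) also yields $\varphi(A)=\varphi(\alpha R)\varphi(\beta R)$, which is precisely $\varphi(\alpha,\beta)$ as defined in (\ref{3.15}).

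With these identifications in hand the conclusion is immediate: Theorem \ref{th1}, applied to the ideal $A=(\alpha R)(\beta R)$ and the element $a\in R$, gives $a^{k\varphi(A)+1}\equiv a\pmod{A}$ for all $k\ge 0$; applying $\tau$ and using $\tau(a^{m})=\overline{a}^{\otimes m}$, $\varphi(A)=\varphi(\alpha,\beta)$, and the congruence equivalence above, we obtain $\overline{a}^{\otimes(k\varphi(\alpha,\beta)+1)}\equiv\overline{a}\pmod{L_{\alpha,\beta}}$, which is (\ref{3.14}). The only step I expect to require genuine care is the absolute-value bookkeeping in $\varphi(\alpha R)=|\det(H^{*}(\overline{\alpha}))|-1$: one must reconcile the possibly negative field norm $N(\alpha)=\det(H^{*}(\overline{\alpha}))$ with the positive ideal norm $N(\alpha R)$, and one must invoke the "distinct prime elements" hypothesis in the form $\alpha R\ne\beta R$ so that $A$ is genuinely a product of two distinct primes and Theorem \ref{th1} applies verbatim. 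Everything else is transport of structure along the isomorphisms in (\ref{3.2}).
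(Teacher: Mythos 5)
Your proposal is correct and follows essentially the same route as the paper's proof: pull $\overline{a}$ back to $a\in R$ via the NC-property, apply Theorem \ref{th1} to $A=(\alpha R)(\beta R)$, identify $\varphi(A)$ with $\varphi(\alpha,\beta)$ through $N(\alpha R)=|N(\alpha)|=|\det(H^{*}(\overline{\alpha}))|$, and transport the congruence through $\tau$ using Lemma \ref{lm3.1}. You are in fact slightly more careful than the paper, in particular in noting that ``distinct prime elements'' must be read as non-associate (so that $\alpha R\neq\beta R$ and Theorem \ref{th1} applies) and in tracking the absolute values between the field norm and the ideal norm.
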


\begin{proof}
 Since $E$ satisfies   NC-property, $\overline{a} \in \mathbb{Z}^{n}$, then $a=\tau^{-1}(\overline{a}) \in R$. By Theorem \ref{th1} , we have
$$
a^{k \varphi(A)+1} \equiv a ( \text { mod }\ \  A ).
$$
It is easy to see that
$$
\begin{aligned}
\varphi(A)=\varphi(\alpha R) \varphi(\beta A)&=(N(\alpha R)-1)(N(\beta R)-1) \\
&=(|N(\alpha)|-1)(|N(\beta)|-1) \\
&=\left(\left|\operatorname{det}\left(H^{*}(\overline{\alpha})\right)\right|-1\right)\left(\left|\operatorname{det}\left(H^{*}(\overline{\beta})\right)\right|-1\right) \\
&=\varphi(\alpha, \beta).
\end{aligned}
$$
By lemma \ref{lm3.1}, we have
$$
\tau(A)=\tau(\alpha \beta R)= L\left(H^{*}(\overline{\alpha}\otimes \overline{\beta})\right)=L_{\alpha, \beta}
\ \ \text{and}\ \
\tau\left(a^{k \varphi(\alpha,\beta)+1}\right)=\overline{a}^{\otimes(k\varphi(\alpha, \beta)+1)}.
$$
Therefore, (\ref{3.14}) follows immediately.
\end{proof}

According to the above theorem, we may describe an attainable algorithm for high dimensional RSA as follows.

  \begin{table}[h]
\begin{tabular}{l}
\hline
  {\bf{\text{ Algorithm I: \ \ RSA in the Algebraic Numbers field}}}\\
\hline
\\
\ \ \ $n \geq 1$ is a positive integer, $E / Q$ is an algebraic numbers field with NC-property of \\
\ \ \ degree $n$,
 $R \subset E$ is the ring of algebraic integers of $E$, $\alpha \in R$, $ \beta \in R $
 are two distinct  \\  \ \ \ prime elements of $R$,
  $A=\alpha \beta R$ is a principal ideal of $R,$
 $H^{*}(\overline{\alpha} \otimes \overline{\beta})$ is the  ideal  \\ \ \ \ matrix corresponding to $A$,
 $L_{\alpha, \beta}=L\left(H^{*}(\overline{\alpha}\otimes \overline{\beta})\right)$ is the lattice generated by \\ \ \ \ $H^{*}(\overline{\alpha}\otimes \overline{\beta})$,
   $B_{\alpha, \beta}=\text{HNF}\left(L_{\alpha, \beta}\right)$ is the basis of $L_{\alpha,\beta}$ in HNF, \\
  \ \ \  $B_{\alpha, \beta}^{*}=\operatorname{diag}\left\{b_{1}, b_{2}, \cdots, b_{n}\right\}$ is the  corresponding orthogonal basis.\\

$\bullet$\ \ {\bf{Parameters: }}\ \ $ \varphi( \alpha, \beta)=\left(\left|\operatorname{det}\left(H^{*}(\overline{\alpha})\right)\right|-1\right)
\left(\left|\operatorname{det}\left(H^{*}(\overline{\beta})\right)\right|-1\right)$,\\
\ \ \ \ \ \ \ \ \ \ \ \ \ \ \ \ \ \ \ \ \ \ $S_{\alpha, \beta}=\{ x'=(x_{1},x_{2},\cdots, x_{n})\in \mathbb{Z}^{n} \mid 0\leq x_{i}<b_{i}\}.$
$ 1\leq e <\varphi (\alpha,\beta)$, \\
\ \ \ \ \ \ \ \ \ \ \ \ \ \ \ \ \ \ \ \ \ \  $ 1\leq d <\varphi (\alpha,\beta),$
such that $ed\equiv 1(\text{mod}\ \ \varphi (\alpha,\beta))$.\\

\noindent$\bullet$\ \ {\bf{Public keys:}}\ \  The rotation matrix $H$, the lattice $L( B_{\alpha, \beta})=L_{\alpha, \beta}$
and the \\
\ \ \ \ \ \ \ \ \ \ \ \ \ \ \ \ \ \ \ \ \ \ positive integer $e$ are public keys. \\

 \noindent$\bullet$\ \ {\bf{Private keys:}}\ \  Ideal matrices $H^{*}(\overline{\alpha}), H^{*}(\overline{\beta})$,  the basis $H^{*}(\overline{\alpha} \otimes \overline{\beta})$ of $L_{\alpha, \beta}$ \\ \ \ \ \ \ \ \ \ \ \ \ \ \ \ \ \ \ \ \ \ \ \ and positive integer $d$ are private keys.\\

 \noindent$\bullet$\ \ {\bf{Encryption:}} \ \  For any input message $\overline{a} \in S_{\alpha , \beta}$, the ciphertext
$\overline{c} $  is given  by\\ \ \ \ \ \ \ \ \ \ \ \ \ \ \ \ \ \ \ \ \ \ \
$
\overline{c} \equiv \overline{a} ^{ \otimes e} (\operatorname{mod}\ \  L_{\alpha, \beta}).
$\\
 \noindent$\bullet$\ \ {\bf{Decryption:}} \ \
 $\overline{c}^{\otimes d} \equiv \overline{a}^{\otimes  d e} \equiv \overline{a}^{\otimes( k\varphi(\alpha, \beta)+1)} \equiv \overline{a}(\text{mod}\  L_{\alpha, \beta}).$ One can find the plaintext \\
 \ \ \ \ \ \ \ \ \ \ \ \ \ \ \ \ \ \ \ \ \ \ $ \overline{a}$ from $\overline{c}$ in $S_{\alpha, \beta}$.\\
\hline
\end{tabular}
\caption{ Algorithm I}
\end{table}

\newpage
\begin{rem}
If the class number $h_{E}=1$, in other words, $R$ is a UFD, then the prime elements is equivalent to irreducible elements in $R$,
and one can find prime elements  $\alpha$ from $\alpha(x) \in \mathbb{Z}[x]/(\phi(x))$ and $\alpha(x)$  irreducible.

\end{rem}

\section{ Security and Example}
The classical RSA public key cryptosystem is nowadays   used in a wide variety of applications ranging from web browsers to smart cords. Since its initial publication in 1978, many researchers have tried to look for vulnerabilities in the system. Some clever attacks have been found (see \cite{B} and \cite{C} ). However, none of the  known attacks is devastating and the ordinary RSA system is still considered secure.

The security of high dimensional RSA depends on virtually factoring of an element of the algebraic integers ring $R$ into product of of
distinct  prime elements. Factoring on $R$ is much more complicate  than factoring of a positive integer, none of efficient  method is known
  up to day, thus we consider the high dimensional RSA  almost absolutely  secure.

To see the size of private keys, since $\operatorname{det}\left(H^{*}(\overline{\alpha})\right)=N(\alpha)$, it may be extremely huge, for example, if $\alpha=p \in \mathbb{Z},$ $ \beta=q \in \mathbb{Z}$ are prime numbers, then
$$
\operatorname{det}\left(H^{*}(\overline{\alpha})\right)=N(\alpha)=p^{n},\ \ \  \operatorname{det}\left(H^{*}(\overline{\beta})\right)=q^{n}
$$
and
$$\varphi(\alpha, \beta)=\left(p^{n}-1\right)\left(q^{n}-1\right),$$
which is much larger than $p q$, the later is the site of public key  of the classical  RSA cryptosystem.

The lattice based on cryptography have been intensively studied for past two decades. The GGH cryptosystem proposed by Goldreich, Goldwasser and  Halevi in \cite{G}, which is perhaps the most intuitive encryption scheme based on lattices. The public key is a "bad" basis for a lattice,  Micciancio proposed in \cite{M4} to use, as the public basis, the Hermite Normal Form $B=$HNF$(L)$. The private key of GGH is  an  exceptionally good basis  for $L$. The security of GGH  relies on the assumption that it is difficult to find a spacial basis for $L$ from a known basis of  $L$. In this sense, we regard the high dimensional RSA as secure as GGH/HNF cryptosystem at least.

Another number theoretic cryptosystem based on lattice is NTRUEncrypt. The public key cryptosystem NTRU proposed in 1996 by Hoffstein, Pipher and Silverman in \cite{H}, is the fastest known lattice based encryption scheme, although its description relies on arithmetic over polynomial quotient ring $Z[x]/\langle x^{n}-1\rangle$, it was easily observed that it could be expressed as a lattice based on cryptosystem.
NTRU uses a q-ary convolutional modular lattice(see \cite{M2} and \cite{Z3}), its public key is also the HNF basis of L and the private key is 
a special basis of L containing two secrete polynomials $f(x)$ and $g(x)$. Obviously, our algorithm I is at least as hard as solving NTRUEncrypt.

Unfortunately, neither GGH nor NTRU is supported by a proof of security showing that breaking the cryptosystem is at least as hard as solving
some underlying lattice problem; they are primarily  practical proposals aimed at offering a concrete alternative to RSA or other number theoretic cryptosystems(see page 166 of \cite{M2}). However, the significance of this paper is to show that the real alternative of RSA   
is the high dimensional RSA we present here rather than GGH and NTRU.
\begin{example}
 Finally, we give an example and see how to work of the high dimensional RSA in a quadratic field. Let $E=Q(\sqrt{d})$,  $d \in \mathbb{Z}$ be a square-free integer and $d\equiv 2,$  or $3 \  \mathrm{mod} \ 4$, thus  $E$ satisfies the NC-property. Let $\delta_{E}$ be the discriminant of $E$, it is known that $\delta_{E}=4 d$( see Proposition 13.1.2 of \cite{I}). Let  $p \in \mathbb{Z}$ be an odd prime satisfying  the following condition
 \begin{equation}\label{4.1}
p \nmid 4d,\ \ \text{ and }\ \ x^{2}\equiv d(\text{mod}\ \  p) \  \text{is not solvable in }\ \mathbb{Z}.
\end{equation}
By Proposition 13.1.3 of \cite{I}, we know that $p$ is a prime element in $E$.

According to Algorithm $\mathrm{I}$, we select two large primes $p$ and $q$ of which satisfying (\ref{4.1}). Let $\alpha=p$ and $\beta=q$, then
$$
\bar{\alpha}=\left(\begin{array}{l}
p \\
0
\end{array}\right),\  \bar{\beta}=\left(\begin{array}{l}
q \\
0
\end{array}\right),  \  H^{*}(\overline{\alpha})=\left(\begin{array}{ll}
p & 0 \\
0 & p
\end{array}\right),\  \text { and }\  H^{*}(\overline{\beta})=\left(\begin{array}{ll}
q & 0 \\
0 & q
\end{array}\right).
$$
It follows that
\begin{equation}\label{4.2}
H^{*}( \overline{\alpha} \otimes \overline{\beta})= H^{*}( \overline{\alpha} )H^{*}( \overline{\beta} )=\left(\begin{matrix}
pq&0\\
0&pq
\end{matrix}\right),\ \ L_{\alpha,\beta}=L\left(H^{*}( \overline{\alpha} \otimes \overline{\beta}) \right)
\end{equation}
and
\begin{equation}\label{4.3}
S_{\alpha, \beta}=\left\{ x=\left(\begin{matrix}
x_{1}\\
x_{2}
\end{matrix}\right) \in \mathbb{Z}^{2}\mid 0\leq x_{1}, x_{2} <pq \right\}.
\end{equation}
It is easy to see that
\begin{equation}\label{4.4}
\varphi(\alpha,\beta)=(p^{2}-1)(q^{2}-1).
\end{equation}
\end{example}

In this  special case, the 2-dimensional RSA maybe described as follows.

 \begin{table}[h]
\begin{tabular}{l}
\hline
  {\bf{ \ \ RSA in a  Quadratic Field  }}\\
\hline
\\
$\bullet$\ \ {\bf{Parameters:}} \ \ $ E=Q(\sqrt{d})$, $d$ is a square-free integer and $d\equiv 2$ or $3(\text{mod}\ \ 4)$,\\
\ \ \ \ \ \ \ \ \ \ \ \ \ \ \ \ \ \ \ \ \ \ the rotation matrix $H= \left(\begin{matrix}
0 & d\\
1& 0
\end{matrix}\right) $, $p,q$ are two large and distinct  \\
\ \ \ \ \ \ \ \ \ \ \ \ \ \ \ \ \ \ \ \ \ \ prime numbers of which satisfy (\ref{4.1}).  $ N=p  q$ and $\chi(N)=\left(p^{2}-1\right)\left(q^{2}-1\right).$\\
\ \ \ \ \ \ \ \ \ \ \ \ \ \ \ \ \ \ \ \ \ \
$L=L(B)$ is a lattice, $B=\left(\begin{array}{cc}\mathrm{N} & 0 \\ 0 & \mathrm{N}\end{array}\right) .$ $1 \leq e<\chi(N),$ $ 1 \leq d_{1} <\chi(N)$\\
\ \ \ \ \ \ \ \ \ \ \ \ \ \ \ \ \ \ \ \ \ \   such that $e d_{1} \equiv 1(\operatorname{mod}\ \  \chi(N))$.\\

\noindent$\bullet$\ \ {\bf{Public keys:}}\ \  $H, N$ and the positive integer $e$ are public keys.\\

 \noindent$\bullet$\ \ {\bf{Private keys:}}\ \  $p$, $q$ and the positive integer $d_{1}$ are private keys.\\

 \noindent$\bullet$\ \ {\bf{Encryption:}} \ \  For any $a=\left(\begin{array}{l}a_{1} \\ a_{2}\end{array}\right) \in \mathbb{Z}_{pq}^{2}$, the ciphertext $c=\left(\begin{array}{l} c_{1} \\ c_{2}\end{array}\right) \in \mathbb{Z}^{2}$ \\ \ \ \ \ \ \ \ \ \ \ \ \ \ \ \ \ \ \ \ \ \ \
given by $
c\equiv a^{ \otimes e} (\operatorname{mod}\ \  L).$\\
 \noindent$\bullet$\ \ {\bf{Decryption: }}\ \
 $c^{\otimes d_{1}} \equiv a^{\otimes  d_{1}e} \equiv a(\text{mod}\  L).$ One can find the plaintext $ a$ from $c$ in $\mathbb{Z}_{pq}^{2}$.\\
\hline
\end{tabular}
\caption{RSA in a  Quadratic Field}
\end{table}

We can similarly deal with the cases of Cyclotomic Fields. Let $n=\varphi(m)$ for some positive integers $m$, $\xi_{m}=e^{2\pi i /m},$
$E=Q(\xi_{m})$  and $R \subset E$ be the ring of algebraic integers of $E.$ Suppose that $p\in \mathbb{Z}$  is a rational prime number, then $p$ is a prime element of $R$ if and only if (see Theorem 2 of page 196 of \cite{I})
\begin{equation}\label{4.2}
p \nmid m \ \ \text{and}\ \ p^{\varphi(m)} \equiv 1(\text{mod}\ \ m).
\end{equation}
Suppose that $p \in \mathbb{Z}$ and $q\in \mathbb{Z}$ are two distinct prime numbers satisfying (\ref{4.2}), we obtain  the lattice $L(H^{*}(\overline{p} \otimes \overline{q} ))$ and an attainable algorithm in $Q(\xi_{m}). $
 \vspace{-0.4cm}{\footnotesize}

\end{document}